\newcommand{\email}[1]{{{\em Email}: {\tt #1}}}
\newcommand{\tmop}[1]{\ensuremath{\operatorname{#1}}}
\newtheorem{definition}{Definition}
\newtheorem{corollary}{Corollary}
\newtheorem{proposition}{Proposition}
\newenvironment{proof}{
  \noindent\textbf{Proof}\ }{\hspace*{\fill}
  \begin{math}\Box\end{math}\medskip}
\newtheorem{notation}{Notation}
\newcommand{\tmmathbf}[1]{\ensuremath{\boldsymbol{#1}}}
\newtheorem{theorem}{Theorem}
\newtheorem{lemma}{Lemma}
\newtheorem{remark}{Remark}
\newtheorem{example}{Example}
\def\CC{{\mathbbm C}}
\newcommand{\dsp}{\displaystyle}
\def\shuff#1#2{\mathbin{
      \hbox{\vbox{\hbox{\vrule \hskip#2 \vrule height#1 width 0pt}\hrule}\vbox{\hbox{\vrule \hskip#2 \vrule height#1 width 0pt\vrule }\hrule}}}}
\def\qshuffl{{\mathchoice{\shuff{5pt}{3.5pt}\hspace{-2.9mm}-}{\shuff{5pt}{3.5pt}\hspace{-2.9mm}-}
{\shuff{3pt}{2.6pt}\hspace{-2.2mm}-}{\shuff{3pt}{2.6pt}\hspace{-2.2mm}-}}}
\def\qshuffle{\,\qshuffl\,}
\newenvironment{arb}{\begin{tikzpicture}[baseline,scale=0.5,level distance=7mm,level 1/.style={sibling distance=10mm},level 2/.style={sibling distance=5mm},level 3/.style={sibling distance=3mm},grow=down, font=\scriptsize]
\tikzstyle{ve}=[draw,circle,inner sep=1pt,fill] 
\tikzstyle{vv}=[draw,circle,inner sep=1pt] 
\tikzstyle{vee}=[minimum size=0pt ,inner sep=0pt]}{\end{tikzpicture}}
\newenvironment{arbb}{\begin{tikzpicture}[baseline,scale=0.5,level distance=7mm,level 1/.style={sibling distance=25mm},level 2/.style={sibling distance=5mm},level 3/.style={sibling distance=3mm},grow=down, font=\scriptsize]
\tikzstyle{ve}=[draw,circle,inner sep=1pt,fill] 
\tikzstyle{vv}=[draw,circle,inner sep=1pt] 
\tikzstyle{vee}=[minimum size=0pt ,inner sep=0pt]}{\end{tikzpicture}}
\begin{document}

\title{Renormalization: a quasi-shuffle approach.} 
\author{Fr\'ed\'eric Menous\footnote{
Laboratoire de Math\'ematiques d'Orsay, 
Univ. Paris-Sud, CNRS, Universit\'e Paris-Saclay, 
91405 Orsay, France. \email{Frederic.Menous@math.u-psud.fr}}\  and Fr\'ed\'eric Patras\footnote{
Laboratoire J.A. Dieudonn\'e,
         		Universit\'e de la C\^ote d'Azur,
         		CNRS, UMR 7531, 
         		Parc Valrose,
         		06108 Nice Cedex 2, France.
\email{Frederic.PATRAS@unice.fr}}}
% \URLADDR{WWW-MATH.UNICE.FR/~PATRAS}

\date{}

\maketitle

\abstract{In recent years, the usual BPHZ algorithm for renormalization in perturbative quantum
  field theory has been interpreted, after dimensional regularization, as a
  Birkhoff decomposition of characters on the Hopf algebra
  of Feynman graphs, with values in a Rota-Baxter algebra of amplitudes.
  We associate in this paper to any such algebra a universal semigroup (different in nature from the Connes-Marcolli "cosmical Galois group"). Its action on the physical amplitudes associated to Feynman graphs produces the expected operations: Bogoliubov's preparation map, extraction of divergences, renormalization.
 In this process a key role is played by commutative and noncommutative quasi-shuffle bialgebras whose universal properties are instrumental in encoding the renormalization process.}

\section*{Introduction.}
In the early 2000s, the usual BPHZ algorithm for renormalization in perturbative quantum
  field theory has been interpreted, after dimensional regularization, as a
  Birkhoff decomposition of characters on the Hopf algebra
  of Feynman graphs, with values in a Rota-Baxter algebra of amplitudes \cite{ck1,ck2,guo}. 
  This idea was later shown to be meaningful in a broad variety of contexts: in the theory of dynamical systems, in analysis and numerical analysis (Rayleigh-Schr\"odinger series) or, more recently, in the theory of regularity structures and the study of very irregular stochastic differential equations or stochastic partial differential equations, see e.g. \cite{men,men06,men13,hai,hz}
  
In this context, P. Cartier suggested the existence of a hidden universal symmetry group (the "cosmical Galois group") that would underlie renormalization. Using geometrical tools such as universal singular frames, Connes and Marcolli constructed a candidate group in 2004 \cite{CM}. Their construction was translated in the langage of Hopf algebras in \cite{KGP} and the group shown to coincide with the prounipotent group of group-like elements in the completion with respect to the grading of the descent algebra -a Hopf algebra that, as an algebra, is the free associative algebra generated by the Dynkin operators \cite{Pat}.

However, the action of this group or of the descent algebra on the Hopf algebras of Feynman diagrams showing up in pQFT does not actually perform renormalization. It captures nicely certain phenomena related to Lie theory and the behaviour of the Dynkin operators: for example, the structure of certain renormalization group equations and the algebraic properties of beta functions (see the original article by Connes and Marcolli \cite{CM} and the detailed algebraic and combinatorial analysis of these phenomena in \cite{patrasC}. Further insights on the role of (generalized) Dynkin operators in the theory of differential equations can be found in \cite{MP}). However, the group and the descent algebra act on Feynman diagrams and do not encode operations that occur at the level of the target algebra of amplitudes. They fail therefore to
capture typical renormalization operations such as projections on divergent or regular components of amplitudes. Substraction maps, for example, cannot be encoded in it, and neither are more advanced operations such as the construction of the counterterm.

In the present article, we follow a different approach that complements Connes-Marcolli's and its Hopf algebraic and combinatorial interpretation by showing show how a semigroup of operators can be associated to the algebra of coefficients of a given regularization and renormalization scheme in pQFT. Its construction relies heavily on the universal properties of commutative and noncommutative quasi-shuffle algebras. This semigroup acts in a natural way on regularized amplitudes and perform the expected operations: preparation map, extraction of counterterms, renormalization. Notice that many of our results and constructions do not require the algebra of coefficients to be commutative.

Let us sketch up the ideas and results.
Concretely we deal with conilpotent bialgebras $H = k \oplus H^+$. These bialgebras are Hopf algebras and the
coalgebra structure on $H$ induces a convolution product on the space
$\mathcal{L}( H, A )$ of linear morphisms from $H$ to an associative algebra
$A$. If $A$ is unital, then the subset $\mathcal{U}( H, A )$ of linear
morphisms that send the unit $1_H$ of $H$ on the unit $1_A$ of $A$ is a group for the convolution and, if A is
commutative, the subset $\mathcal{C}( H, A )$ of characters (i.e. algebra morphisms)
is a subgroup of $\mathcal{U}( H, A )$. 

In pQFT, the algebra $A$ is often called the algebra of (regularized) amplitudes, and we will often use this terminology.
In this context,
the renormalization process equips the target  unital algebra $A$ with
a projection operator $p_+$ such that
$$A = \tmop{Im} p_+ \oplus \tmop{Im} p_- = A_+ \oplus A_-,$$ where $p_- =
  \tmop{Id} - p_+$ and  $A_+$ and $A_-$ are subalgebras.
Here, $p_-$ should be thought of as a projection on the "divergent part", so that $p_+$ substract divergences. For example, in dimensional regularization, $A$ identifies with the algebra of Laurent series, $\CC[[\varepsilon,\varepsilon^{-1}]$, and $p_-$ (resp. $p_+$) is the projection on $\varepsilon^{-1} \CC[\varepsilon^{-1}]$ (resp. $\CC[[\varepsilon ]]$).
As was first observed by  Ebrahimi-Fard, building on previous results by Brouder and Kreimer, these data define a Rota-Baxter algebra structure on $A$ and $\mathcal{L}( H, A )$.

The choice of the subtraction operator is not always unique --for example when using momentum subtraction schemes. How this phenomenon impacts the combinatorics and Rota-Baxter structures was investigated in \cite{EFP2}. Although we do not investigate it further here, the tools we develop in the present article should be useful in that context since they put forward the idea that one should study for its own the combinatorial structure of the target algebra of amplitudes $A$, independently of the choice of a particular subtraction map $p_+$.

It is then well-know that, given $p_+$, there exists a unique
Birkhoff decomposition of any morphism $\varphi \in
\mathcal{U}( H, A )$
\[ \varphi_- \ast \varphi = \varphi_+ \hspace{2em} \varphi_+, \varphi_- \in
   \mathcal{U}( H, A ) \]
where $\varphi_+ ( H^+ ) \subset A_+$ and $\varphi_- ( H^+ ) \subset A_-$.
Moreover, if $A$ is commutative, this decomposition is defined in the subgroup
$\mathcal{C}( H, A )$. The classical proofs of this result are recursive, using the
filtration on $H$ (they rely ultimately on the Bogoliubov recursion \cite{emp}). 

We propose to develop here a ``universal'' framework to handle the combinatorics of renormalization and to give in this framework explicit, and in some sense universal,
formulas for $\varphi_+$ and $\varphi_-$.
To do so, we consider the quasi-shuffle Hopf algebra
$QSh(A)$ over an algebra $A$, that is, the standard tensor coalgebra over
$A$ equipped with the quasi-shuffle (or stuffle) product. Using the properties of the functor $QSh$ (including the surprising property, for any Hopf algebra $H$ to be canonically embedded into $QSh(H^+)$), we compute then the inverse and the Birkhoff decomposition of a
fundamental element $j \in \mathcal{U}( QSh(A), A )$ defined by
\[ j ( 1 ) = 1_A, \hspace{1em} j ( a_1 ) = a_1, \hspace{1em} j ( a_1
   \otimes \ldots \otimes a_s ) = 0 \hspace{1em} \tmop{if} \hspace{1em} s \geq
   2 .\]
We show then the existence of an action of $\mathcal{U}(
QSh(A), A )$ on $\mathcal{U}( H, A )$. More precisely we define a map
$$\mathcal{U}( QSh(A), A ) \times \mathcal{U}( H, A ) \rightarrow
\mathcal{U}( H, A )$$
$$(f,\varphi)\mapsto f\odot\varphi,$$ such that
\[  j\odot \varphi = \varphi \hspace{1em} \tmop{and} \hspace{1em} ( f \ast
   g)\odot \varphi =  ( f\odot \varphi ) \ast ( g\odot \varphi ), \]
and obtain explicit formulas such as:
\begin{enumerate}
  \item If $j^{\ast - 1}$ is the inverse of $j$, then $\varphi^{\ast - 1} = 
   j^{\ast - 1}\odot \varphi $.
  
  \item If $j_- \ast j = j_+$ (Birkhoff decomposition), then $\varphi_- \ast \varphi
  = \varphi_+$ where $\varphi_{\pm} =  j_{\pm}\odot \varphi$.
\end{enumerate}

The article is organized as follows. After a preliminary section fixing notations and recalling general properties of Hopf algebras,
section \ref{sect:1} analyses the algebraic properties of algebras of regularized amplitudes and explains how they give rise to quasi-shuffle algebra structures. Section \ref{sect:2} introduces Hoffman's quasi-shuffle functor (i.e. the notion of quasi-shuffle algebra over an algebra -in the commutative case, it is the left adjoint to the forgetful functor from quasi-shuffle algebras to commutative algebras). Section \ref{sect:3} investigates its categorical properties, including a surprising right adjoint property (Thm. \ref{mainthm}). Section \ref{sect:4} studies, using these techniques, the map $j$ (mapping a cofree coalgebra to its cogenerating vector space). This is the key to latter applications to renormalization which are the purpose of Section \ref{sect:5}, as well as the construction, for each algebra of amplitudes, of a ``universal semigroup'' in which the operations characteristic of renormalization are encoded. The last two sections survey various applications, in particular to Dynamics and Analysis.

\ \par \ \par
We acknowledge support from the CARMA grant ANR-12-BS01-0017,
"Combinatoire
Alg\'ebrique, R\'esurgence, Moules et Applications"and the CNRS GDR "Renormalisation".
We thank warmly K. Ebrahimi-Fard, from whom we learned some years ago already the meaningfulness of Rota--Baxter algebras and their links with quasi--shuffle algebras.

\section{Notation and Hopf algebra fundamentals}\label{section:0}
Everywhere in the article, algebraic structures are defined over a fixed ground field $k$ of characteristic $0$.
We fix here the notations relative to bialgebras and Hopf algebras, following {\cite{fig}} (see also
\cite{Cartier2}, {\cite{maj}} and {\cite{sweed}}) and refer to these articles and surveys for details and  generalities on the subject.
Recall that a bialgebra $B$ is an associative algebra with unit and a coassociative coalgebra with counit such that the product is a morphism of coalgebras (or, equivalently, the coproduct is a morphism of algebras). We will usually write $m$ the product, $\Delta$ the coproduct, $u:k\to B$ the unit and $\eta:B\to k$ the counit. When ambiguities might arise we put an index (and denote e.g. $m_B$ the product instead of $m$).

 We use freely the Sweedler notation and write
\begin{equation}
  \Delta h = \sum h_{( 1 )} \otimes h_{( 2 )}.
\end{equation}
Thanks to coassociativity, we can define recursively and without any
ambiguity the linear morphisms $\Delta^{[ n ]} : B \rightarrow B^{\otimes n}$
($n \geq 1$) by $\Delta^{[ 1 ]} = \tmop{Id}$ and, for $n \geq 1$,
\begin{equation}
  \Delta^{[ n + 1 ]} = ( \tmop{Id} \otimes \Delta^{[ n ]} ) \circ \Delta = (
  \Delta^{[ n ]} \otimes \tmop{Id} ) \circ \Delta = ( \Delta^{[ k ]} \otimes
  \Delta^{[ n + 1 - k ]} ) \circ \Delta \hspace{1em} ( 1 \leq k \leq n )
\end{equation}
and write
\begin{equation}
  \Delta^{[ n ]} h = \sum h_{( 1 )} \otimes \ldots \otimes h_{( n )}
\end{equation}
In the same way, for $n \geq 1$, we define $m^{[ n ]} : B^{\otimes^n}
\rightarrow B$ by $m^{[ 1 ]} = \tmop{Id}$ and
\begin{equation}
  m^{[ n + 1 ]} = m \circ ( \tmop{Id} \otimes m^{[ n ]} ) = m \circ ( m^{[ n
  ]} \otimes \tmop{Id} )
\end{equation}

The reduced coproduct $\Delta'$ on $H^+:=Ker \ \eta$ is
defined by
\begin{equation}
  \Delta' h = \Delta h - 1 \otimes h - h \otimes 1
\end{equation}
Its iterates (defined as for $\Delta$) are written $\Delta'^{[ n ]}$. A bialgebra is conilpotent (or, more precisely, locally conilpotent) is for any $h\in H^+$ there exists a $n\geq 1$ (depending on $h$) such that $\Delta'^{[ n ]}(h)=0.$

A bialgebra $H$ is a Hopf algebra if there exists an antipode $S$, that is to
say a linear map $S : H \rightarrow H$ such that :
\begin{equation}
  m \circ ( \tmop{Id} \otimes S ) \circ \Delta = m \circ ( S \otimes \tmop{Id}
  ) \circ \Delta = u \circ \eta : H \rightarrow H
\end{equation}
In this article, we will consider only conilpotent bialgebras, which are
automatically Hopf algebras.

Given a connected bialgebra $H$ and an algebra $ A$ with product $ m_A$ and unit $ u_A$, the coalgebra
structure of $H$ induces an associative convolution product on the vector
space $\mathcal{L}( H, A )$ of $k$--linear maps :
\begin{equation}
  \forall ( f, g ) \in \mathcal{L}( H, A ) \times \mathcal{L}( H, A ),
  \hspace{1em} f \ast g = m_A \circ ( f \otimes g ) \circ \Delta
\end{equation}
with a unit given by $u_A \circ \eta$, such that $(\mathcal{L}( H, A ), \ast,
u_A \circ \eta )$ is an associative unital algebra.

\begin{lemma}
  Let $H$ be a conilpotent bialgebra (and therefore a Hopf algebra) and set
  \begin{equation}
    \mathcal{U}( H, A ) = \{ f \in \mathcal{L}( H, A ) \hspace{1em} ;
    \hspace{1em} f ( 1_H ) = 1_A \}
  \end{equation}
  then $\mathcal{U}( H, A )$ is a group for the convolution product.
\end{lemma}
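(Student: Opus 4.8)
The plan is to show that every $f\in\mathcal{U}(H,A)$ has a two-sided inverse for the convolution product, the other group axioms (associativity, existence of the unit $u_A\circ\eta$) being already recorded in the preceding paragraph. The key observation is that $f(1_H)=1_A$ means $f$ differs from the convolution unit only on $H^+=\ker\eta$, and conilpotency makes the ``correction'' term locally nilpotent so that a geometric-series formula converges pointwise.

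Concretely, I would first write $f=u_A\circ\eta+\bar f$ where $\bar f$ vanishes on $k\cdot 1_H$ and is supported on $H^+$; equivalently $\bar f=f\circ\pi$ where $\pi=\mathrm{Id}-u\circ\eta$ is the projection onto $H^+$. Next I would compute, for $h\in H^+$, the iterated convolution powers $\bar f^{\ast n}(h)=m_A^{[n]}\circ\bar f^{\otimes n}\circ\Delta^{[n]}(h)$ and note that, because $\bar f$ kills $1_H$, only the fully-reduced part of $\Delta^{[n]}(h)$ contributes, i.e. $\bar f^{\ast n}(h)$ factors through $\Delta'^{[n]}(h)$. By local conilpotency there is $N=N(h)$ with $\Delta'^{[n]}(h)=0$ for $n\ge N$, hence $\bar f^{\ast n}(h)=0$ for $n\ge N$. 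Therefore the formal sum
\[
f^{\ast -1}:=\sum_{n\ge 0}(-1)^n\,\bar f^{\ast n}
\]
is well-defined as an element of $\mathcal{L}(H,A)$: it is finite when evaluated on any $1_H$ or any $h\in H^+$, and it sends $1_H$ to $1_A$, so it lies in $\mathcal{U}(H,A)$. Then a telescoping computation gives $f\ast f^{\ast-1}=(u_A\circ\eta+\bar f)\ast\sum_{n\ge0}(-1)^n\bar f^{\ast n}=u_A\circ\eta$, and symmetrically $f^{\ast-1}\ast f=u_A\circ\eta$; here one uses only associativity of $\ast$ and that $u_A\circ\eta$ is its unit.

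I do not expect a genuine obstacle; the one point requiring a little care is the bookkeeping that $\bar f^{\ast n}$ really does vanish once $n$ exceeds the conilpotency index of the argument. This hinges on the identity $\Delta^{[n]}=\Delta^{[n]}$ decomposing, after projecting each tensor factor through $\pi$, into $\Delta'^{[n]}$ plus terms containing at least one factor $1_H$ (an inclusion–exclusion over which slots carry the counit), so that $\bar f^{\otimes n}$ annihilates everything but the $\Delta'^{[n]}$ component. Once this is in place, the convergence of the geometric series is pointwise and the inverse formula and the verification of the group axioms are routine. An alternative, essentially equivalent route is to filter $H$ by the conilpotency degree and build $f^{\ast-1}$ by the standard recursion (Bogoliubov-type), but the closed geometric-series form above is cleaner and is exactly what the later sections will want to generalize.
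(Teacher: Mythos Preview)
Your proposal is correct and is essentially the same argument as the paper's: writing $\bar f=f-u_A\circ\eta$ and inverting via the geometric series $\sum_{n\ge0}(-1)^n\bar f^{\ast n}$, which terminates pointwise on $H^+$ because $\bar f^{\ast n}$ factors through $\Delta'^{[n]}$ and $H$ is conilpotent. The paper's formula $u_A\circ\eta+\sum_{k\ge1}(u_A\circ\eta-f)^{\ast k}$ is literally the same series (since $u_A\circ\eta-f=-\bar f$), and the key vanishing observation is identical; you have simply spelled out the telescoping verification and the reduction to $\Delta'^{[n]}$ a bit more explicitly.
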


\begin{proof}
  $\mathcal{U}( H, A )$ is obviously stable for the convolution product.
  Following {\cite{fig}}, we will remind why any element $f \in \mathcal{U}(
  H, A )$ as a unique inverse $f^{\ast - 1}$ in $\mathcal{U}( H, A )$. 
  One can write formally
  \begin{equation}
    f^{\ast - 1} = ( u_A \circ \eta - ( u_A \circ \eta - f ) )^{\ast - 1} =
    u_A \circ \eta + \sum_{k \geq 1} ( u_A \circ \eta - f )^{\ast k}
  \end{equation}
  This series seems to be infinite but, because of the conilpotency assumption, for any $h \in H'$
  \begin{equation}
    ( u_A \circ \eta - f )^{\ast k} ( h )=( - 1 )^k m_A^{[ k ]} \circ f^{\otimes k} \circ
    \Delta'^{[ k ]} ( h ) \label{eq15}
  \end{equation}
  vanishes for $k$ large enough.
  \end{proof}
  
When this result is applied to $\tmop{Id} : H \rightarrow H \in
\text{$\mathcal{U}( H, H )$}$, then its convolution inverse is the antipode
$S$ (this is the usual way of proving that any conilpotent bialgebra is a Hopf algebra). 

\begin{notation}
  If $B \subset A$ is a subalgebra of $A$ which is not unital, then we write
  \[ \mathcal{U}( H, B ) = \{ f \in \mathcal{L}( H, A ) \hspace{1em} ;
     \hspace{1em} f ( 1_H ) = 1_A \hspace{1em} \tmop{and} \hspace{1em} f ( H^+
     ) \subset B \} \]
  This is a subgroup of $\mathcal{U}( H, A )$.
\end{notation}

Let now $\mathcal{C}( H, A )$ be the subset of $\mathcal{L}( H, A )$ whose elements
are algebra morphisms (also called characters over $A$). Of course,
\[ \mathcal{C}( H, A ) \subset \text{$\mathcal{U}( H, A )$} \]
but this shall not be a subgroup: if $A$ is not commutative, there is no
reason why it should be stable for the convolution product. Nonetheless if $A$ is commutative, the product from $A\otimes A$ to $A$ is an algebra map: it follows that the convolution of algebra morphisms is an algebra morphism and $\mathcal{C}( H, A )$ is a subgroup of
$\text{$\mathcal{U}( H, A )$}$.

Moreover if $f
\in \text{$\mathcal{U}( H, A )$}$ is an algebra map, then its inverse $f^{\ast
- 1}$ in $\mathcal{U}( H, A )$ is an antialgebra map given by $f^{\ast - 1} = f \circ S$ :
\begin{equation}
  \begin{array}{ccc}
    f \ast f \circ S & = & m_A \circ ( f \otimes f \circ S ) \circ \Delta\\
    & = & m_A \circ ( f \otimes f ) \circ ( \tmop{Id} \otimes S ) \circ
    \Delta\\
    & = & f \circ m \circ ( \tmop{Id} \otimes S ) \circ \Delta\\
    & = & f \circ u \circ \eta\\
    & = & u_A \circ \eta ,
  \end{array}
\end{equation}
where we recall that the antipode is an antialgebra morphism:
$$S(gh)=S(h)S(g).$$

\section{From renormalization to quasi-shuffle algebras}\label{sect:1}
The fundamental ideas of renormalization in pQFT were already alluded at in the introduction, we recall them very briefly and refer to textbooks for details (this first paragraph is mainly motivational, we will move immediately after to an algebraic framework that can be understood without mastering the quantum field theoretical background). Starting from a given quantum field theory, one expands perturbatively the quantities of interest (such as Green's functions). This expansion is indexed by Feynman diagrams, and to each of these diagrams is associated a quantity computed by means of certain integrals. Very often, these integrals are divergent and need to be regularized and renormalized. 
Typically, a quantity such as 
$$
    \phi(c):=\int_{0}^{\infty} \frac{dy}{y+c}
$$ is divergent, but becomes convergent up to the introduction of an arbitrary small regularizing parameter $\varepsilon$ (for dimensional reasons, one also introduces a mass term $\mu$)
$$
    \phi(c;\varepsilon):=\int_{0}^{\infty} \frac{\mu^{\varepsilon}d y}{(y+c)^{1+\varepsilon}}
                                =\frac{1}{\varepsilon} + \log(\mu/c) +O(\varepsilon).$$
In that toy model case, close to the dimensional regularization method, the "regularized amplitude" $\phi(c;\varepsilon)$ lives in $A=\CC[[\varepsilon,\varepsilon^{-1} ]$ and is renormalized by removing the divergency $\frac{1}{\varepsilon}$ (the component of the expansion in $\varepsilon^{-1}\CC[\varepsilon^{-1}]$). 

These ideas are axiomatized using the notion of Rota--Baxter algebras as follows.
Following {\cite{kur}}, let $p_+$ an idempotent of $\mathcal{L}( A, A )$ where
$A$ is a unital algebra (in our toy model example, $p_+$ would stand for the projection on $\CC[[\varepsilon ]]$). If we have for $x, y$ in $A$ :
\begin{equation}
  p_+ ( x ) p_+ ( y ) + p_+ ( x y ) = p_+ ( x p_+ ( y ) ) + p_+ ( p_+ ( x ) y
  ) ),
\end{equation}
then $p_+$ is a Rota-Baxter operator, $( A, p_+ )$ is a Rota-Baxter algebra
and if $p_- = \tmop{Id} - p_+$, $A_+ = \tmop{Im} p_+$ and $A_- = \tmop{Im}
p_-$ then
\begin{itemize}
  \item $A = A_+ \oplus A_-$.
  
  \item $p_-$ satisfies the same relation.
  
  \item $A_+$ and $A_-$ are subalgebras.
\end{itemize}
Conversely if $A = A_+ \oplus A_-$ and $A_+$ and $A_-$ are subalgebras, then
the projection $p_+$ on $A_+$ parallel to $A_-$ defines a Rota-Baxter algebra
$( A, p_+ )$.

The idempotency condition is not required to define a Rota--Baxter algebra. In general:

\begin{definition}
A Rota--Baxter (RB) algebra is an associative algebra $A$ equipped with a linear endomorphism $R$ such that
$$\forall x,y\in A, R(x)R(y)=R(R(x)y+xR(y)- xy).$$
It is an idempotent RB algebra if $R$ is idempotent (in that case we will set $p_+:=R$ to emphasize that we are in the framework typical for renormalization). It is a commutative Rota--Baxter algebra if it is commutative as an algebra.
\end{definition}

The notion of Rota--Baxter algebra is actually slightly more general: a Rota--Baxter algebra of weight $\theta$ is defined by the identity
$$\forall x,y\in A, R(x)R(y)=R(R(x)y+xR(y)+\theta xy).$$
We restrict here the definition to the weight $-1$ case, which is the one meaningful for renormalization.

Using Rota--Baxter algebras of amplitudes, the principle of renormalization in physics can be formulated algebraically in the following
way.

\begin{proposition}
  Let $H$ be a conilpotent bialgebra and $( A, p_+ )$ an idempotent Rota-Baxter algebra (so that $A=A_-\oplus A_+$). Then
  for any $\varphi \in \mathcal{U}( H, A )$ there exists a unique pair $(
  \varphi_+, \varphi_- ) \in \mathcal{U}( H, A_+ ) \times \mathcal{U}( H, A_-
  )$ such that
  \begin{equation}
    \varphi_- \ast \varphi = \varphi_+
  \end{equation}
  Moreover, if $A$ is commutative and $\varphi$ is a character, then
  $\varphi_+$ and $\varphi_-$ are also characters. This factorization is
  called the Birkhoff decomposition of $\varphi$.
\end{proposition}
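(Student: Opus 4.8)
The plan is to obtain existence and uniqueness by the Bogoliubov recursion along the conilpotency filtration of $H$, and then to promote the construction to characters by means of the Rota--Baxter identity.

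\emph{Existence.} Filter $H^+$ by $H^+_{(n)} := \{h \in H^+ : \Delta'^{[n+1]}(h) = 0\}$; conilpotency means $H^+ = \bigcup_n H^+_{(n)}$, and for $h \in H^+_{(n)}$ one has $\Delta' h = \sum h' \otimes h''$ with $h', h'' \in H^+_{(n-1)}$. I would define $\varphi_- \in \mathcal{L}(H,A)$ by $\varphi_-(1_H) = 1_A$ and, recursively on $n$, for $h \in H^+$,
\[ \varphi_-(h) := -\,p_-\bigl(\bar{\varphi}(h)\bigr), \qquad \bar{\varphi}(h) := \varphi(h) + \sum \varphi_-(h')\,\varphi(h''), \]
which is well defined since the $\varphi_-(h')$ belong to an already-constructed layer. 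Put $\varphi_+ := \varphi_- \ast \varphi$. Then $\varphi_\pm(1_H) = 1_A$, and expanding $\Delta h = 1_H \otimes h + h \otimes 1_H + \Delta' h$ for $h \in H^+$ gives $\varphi_+(h) = \varphi_-(h) + \bar{\varphi}(h) = \bigl(\tmop{Id} - p_-\bigr)\bar{\varphi}(h) = p_+\bar{\varphi}(h) \in A_+$, while $\varphi_-(h) = -p_-\bar{\varphi}(h) \in A_-$. Hence $(\varphi_+,\varphi_-) \in \mathcal{U}(H,A_+) \times \mathcal{U}(H,A_-)$ and $\varphi_- \ast \varphi = \varphi_+$. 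Note that only the vector-space splitting $A = A_+ \oplus A_-$ is used here.

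\emph{Uniqueness.} If $\psi_- \ast \varphi = \psi_+$ is a second such factorization, the same expansion gives, for $h \in H^+$, $\psi_+(h) - \psi_-(h) = \varphi(h) + \sum \psi_-(h')\varphi(h'')$. Arguing by induction on $n$ with $h \in H^+_{(n)}$: the right-hand side agrees with $\bar{\varphi}(h)$ once $\psi_-$ and $\varphi_-$ agree on $H^+_{(n-1)}$, and since $\psi_+(h) \in A_+$, $\psi_-(h) \in A_-$, the splitting $A = A_+ \oplus A_-$ forces $\psi_+(h) = p_+\bar{\varphi}(h) = \varphi_+(h)$ and $\psi_-(h) = -p_-\bar{\varphi}(h) = \varphi_-(h)$.

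\emph{Character property.} Suppose $A$ is commutative and $\varphi$ is a character. Since $\mathcal{C}(H,A)$ is then a group under $\ast$, it is enough to prove that $\varphi_-$ is multiplicative, for $\varphi_+ = \varphi_- \ast \varphi$ is then automatically a character. I would show $\varphi_-(xy) = \varphi_-(x)\varphi_-(y)$ for $x, y \in H^+$ by induction on the sum of their filtration degrees. Expanding $\Delta'(xy)$ through $\Delta(xy) = \Delta x\,\Delta y$, using that $\varphi$ is a character on the right-hand tensor factors, applying the induction hypothesis to the strictly lower-degree products $xy'$, $x'y$, $x'y'$ that occur, and using commutativity of $A$ to factor $\sum \varphi_-(x')\varphi_-(y')\varphi(x'')\varphi(y'')$, one arrives at the multiplicativity of the preparation map,
\[ \bar{\varphi}(xy) = \bar{\varphi}(x)\bar{\varphi}(y) + \varphi_-(x)\bar{\varphi}(y) + \bar{\varphi}(x)\varphi_-(y). \]
Applying $-p_-$ and rewriting $p_-\bigl(\bar{\varphi}(x)\bar{\varphi}(y)\bigr)$ by the Rota--Baxter relation $p_-(a)p_-(b) = p_-\bigl(p_-(a)b + a\,p_-(b) - ab\bigr)$ with $a = \bar{\varphi}(x)$, $b = \bar{\varphi}(y)$ (so $p_-(a) = -\varphi_-(x)$, $p_-(b) = -\varphi_-(y)$) makes all terms but $\varphi_-(x)\varphi_-(y)$ cancel, closing the induction.

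The existence and uniqueness parts are routine; the real work — and the only place the multiplicativity of $p_\pm$ (equivalently, the Rota--Baxter axiom) is used, beyond the vector-space splitting — is the character property, precisely the bookkeeping that sorts the reduced coproduct of a product into exactly the combination $\bar{\varphi}(x)\bar{\varphi}(y) + \varphi_-(x)\bar{\varphi}(y) + \bar{\varphi}(x)\varphi_-(y)$, so that a single application of the Rota--Baxter identity collapses it. This is the classical Connes--Kreimer computation; alternatively one may invoke the Bogoliubov recursion of \cite{emp} directly.
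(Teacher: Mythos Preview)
Your argument is correct. The existence part matches the paper's Bogoliubov recursion essentially verbatim. For uniqueness you induct along the conilpotency filtration, whereas the paper gives a shorter group-theoretic argument: from two factorizations one forms $\psi_+\ast\varphi_+^{\ast-1}=\psi_-\ast\varphi_-^{\ast-1}$, observes this lies in $\mathcal{U}(H,A_+)\cap\mathcal{U}(H,A_-)$, and concludes it is $u_A\circ\eta$ since $A_+\cap A_-=0$. Both are valid; the paper's version avoids re-running the recursion and uses instead that $\mathcal{U}(H,A_\pm)$ are subgroups.

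The substantive divergence is in the character assertion. The paper explicitly \emph{postpones} it: the proof given there stops at existence and uniqueness, and multiplicativity of $\varphi_\pm$ is established only later, as a consequence of the quasi-shuffle machinery that is the paper's main contribution. Concretely, the paper first shows (Section~\ref{sect:4}) that for the universal element $j\in\mathcal{U}(QSh(A),A)$ the counterterm $j_-$ is a character when $A$ is commutative --- by an induction on word length that is formally the same Rota--Baxter cancellation you perform --- and then deduces the general case from the identity $\varphi_\pm=j_\pm\odot\varphi$ of Section~\ref{sect:5}, since $\odot$ preserves characters. Your route is the classical Connes--Kreimer direct computation on $H$ itself; it is self-contained and requires no extra structure, but it does not exhibit the ``universal'' factorisation through $QSh(A)$ that the paper is designed to highlight. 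In effect you do the same Rota--Baxter bookkeeping once for each $H$, while the paper does it once for $j$ and then transports it.
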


\begin{proof}Let us postpone the assertion on characters and prove the existence and unicity -notions such as the one of Bogoliubov's preparation map will be useful later.
 As $A_+$ and $A_-$ are
  subalgebras of $A$, $\mathcal{U}( H, A_+ )$ and $\mathcal{U}( H, A_- )$ are
  subgroups of $\mathcal{U}( H, A )$.
  If such a factorization exists, then it is unique : If $\varphi =
  \varphi_-^{\ast - 1} \ast \varphi_+ = \psi_-^{\ast - 1} \ast \psi_+$, then
  \[ \phi = \psi_+^{} \ast \varphi^{\ast - 1}_+ = \psi_- \ast \varphi_-^{\ast
     - 1} \in \mathcal{U}( H, A_+ ) \cap \mathcal{U}( H, A_- ) \]
  thus for $h \in H^+$, $\phi ( h ) \in A_+ \cap A_- = 0$. We finally get that
  \[ \psi_+^{} \ast \varphi^{\ast - 1}_+ = \psi_- \ast \varphi_-^{\ast - 1} =
     u_A \circ \eta \]
  and $\varphi_+ = \psi_+$, $\varphi_- = \psi_-$.
  
  Let us prove now that the factorization exists. Let $\varphi \in
  \mathcal{U}( H, A )$, we must have $\varphi_+ ( 1_H ) = \varphi_- ( 1_H ) =
  1_A$. Let $\bar{\varphi} \in \mathcal{U}( H, A )$ the Bogoliubov preparation
  map defined recursively on the increasing sequence of vector spaces $H^+_n:=Ker {\Delta'}^{[n]}$ ($n \geq 1$) by
  \begin{equation}\label{eq:Bog}
    \bar{\varphi} ( h ) = \varphi ( h ) - m_A \circ ( p_- \otimes \tmop{Id}
    ) \circ ( \bar{\varphi} \otimes \varphi ) \circ \Delta' ( h )
  \end{equation}
  (since $H$ is conilpotent, $H^+=\cup_nH^+_n$).
  Now if $\varphi_+$ and $\varphi_-$ are the elements of $\mathcal{U}( H, A )$
  defined on $H^+$ by
  \[ \varphi_+ ( h ) = p_+ \circ \bar{\varphi} ( h ) \hspace{1em},
     \hspace{1em} \varphi_- ( h ) = - p_- \circ \bar{\varphi} ( h )
     \hspace{1em} ( \bar{\varphi} ( h ) = \varphi_+ ( h ) - \varphi_- ( h ) ),
  \]
  then
  \[ \text{} \varphi_+ \in \mathcal{U}( H, A_+ ) \hspace{1em}, \hspace{1em}
     \varphi_- \in \text{$\mathcal{U}( H, A_- )$} \hspace{1em}, \hspace{1em}
     \varphi_- \ast \varphi_{} = \varphi_+ \]
 
\end{proof}

We turn now to another algebraic structure, induced by the one of RB algebras, but weaker --the one we will be concerned later on: quasi-shuffle algebras. Concretely, the target algebras of amplitudes (such as the algebra of Laurent series) happen to be quasi-shuffle algebras, whereas the algebras of linear forms on Feynman diagrams with values in a commutative RB algebra of amplitudes happen to be noncommutative quasi-shuffle algebras.

Indeed, a RB algebra is always equipped with an associative product, the RB double product $\star$, defined by:
\begin{equation}
x\star y:=R(x)y+xR(y)-xy
\end{equation}
so that: $R(x)R(y)=R(x\star y)$.
Setting $x\prec y:=xR(y),\ x\succ y:=R(x)y$, one gets 
$$(x  y)\prec z=xyR(z)=x (y\prec z),$$
$$ (x\prec y)\prec z=xR(y)R(z)=x\prec (y\star z),$$
$$(x\succ y)\prec z=R(x)yR(z)=x\succ (y\prec z),$$
and so on. 
These observations give rise to the axioms of noncommutative quasi-shuffle  algebras (NQSh, also called tridendriform, algebras). On an historical note, we learned recently from K. Ebrahimi-Fard that the following axioms and relations seem to have first appeared in the context of stochastic calculus, namely in the work of Karandikar in the early 80's on matrix semimartingales, see e.g. \cite{Kandihar}. See also \cite{fpQSh} for details and other references. 

\begin{definition}
A noncommutative quasi-shuffle algebra (NQSh algebra) is a nonunital associative algebra (with product written $\bullet$) equipped with two other products $\prec, \succ$
such that, for all $x,y,z\in A$:
\begin{align}
\label{E1}(x\prec y)\prec z=x\prec(y\star z),&\ \ (x\succ y)\prec z=x\succ(y\prec z)\\
\label{E3}(x\star y) \succ z=x\succ (y\succ z),& \ \ (x\prec y)\bullet z=x\bullet (y\succ z)\\
\label{E6}(x \succ y)\bullet z=x\succ (y\bullet z),& \ \ (x\bullet y)\prec z=x\bullet (y\prec z).
\end{align}
where $x\star y:=x\prec y+x\succ y+x\bullet y$.
 \end{definition}

Notice that
$
(x\bullet y)\bullet z= x\bullet (y\bullet z)$ and $(\ref{E1})+(\ref{E3})+(\ref{E6})$ imply the associativity of $\star$:
\begin{equation}
(x\star y)\star z=x\star (y\star z).
\end{equation}

When the RB algebra is commutative, the relations between the three products $\prec , \succ , \bullet$ simplify (since $x\prec y=xR(y)=y\succ x$) and one arrives at the definition:

\begin{definition}
A quasi-shuffle (QSh) algebra $A$ is a nonunital commutative algebra (with product written $\bullet$) equipped with another product $\prec$ such that
\begin{align}
\label{QS1}(x\prec y)\prec z&=x\prec(y\star z)\\
\label{QS2}(x\bullet y)\prec z&=x\bullet(y\prec z).
\end{align}
where $x\star y:=x\prec y+y\prec x+x\bullet y$. 
 \end{definition}
 
 We also set for further use $x\succ y:=y\prec x$ (this makes a QSh algebra a NQSh algebra).
The product $\star$ is automatically associative and commutative and defines another commutative algebra structure on $A$. 

It is sometimes convenient to equip NQSh and QSh algebras with a unit. The phenomenon is exactly similar to the case of shuffle algebras \cite{schutz}. Given a NQSh algebra, one sets $B:=k\oplus A$, and the products $\prec$, $\succ$, $\bullet$ have a partial extension to $B$ defined by, for $x\in A$:
$$1\bullet x=x\bullet 1:=0,  \ 1\prec x:=0,\ x\prec 1:=x, \ 1\succ x:=x, \ x\succ 1:=0.$$
The products $1\prec 1$, $1\succ 1$ and $1\bullet 1$ cannot be defined consistenly, but one sets $1\star 1:=1$, making $B$ a unital commutative algebra for $\star$.
The categories of NQSh/QSh and unital NQSh/QSh algebras are equivalent (under the operation of adding or removing a copy of the ground field).

Formally, the relations between RB algebras and NQSh algebras are encoded by the Lemma:
\begin{lemma}
The identities $x\prec y:=xR(y),\ x\succ y:=R(x)y, x\bullet y:=xy$ induce a forgetful functor from RB algebras to NQSh algebras, resp. from commutative RB algebras to QSh algebras.
\end{lemma}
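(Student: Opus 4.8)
The plan is to prove both halves by a direct but organised verification: first that the three prescribed products satisfy the NQSh axioms (resp., in the commutative case, the QSh axioms), and then that the assignment respects morphisms. Since the functor acts as the identity on underlying vector spaces and sends a morphism to itself, once the axioms are checked functoriality is essentially automatic, so the substance lies in the axiom check.

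First I would isolate the single computational input. In a weight $-1$ Rota--Baxter algebra the defining identity is $R(x)R(y)=R\bigl(R(x)y+xR(y)-xy\bigr)$, and with the prescription $x\prec y:=xR(y)$, $x\succ y:=R(x)y$ and $x\bullet y:=-xy$ (the sign being dictated by the weight $-1$ normalisation, so that the NQSh product $x\star y=x\prec y+x\succ y+x\bullet y$ coincides with the Rota--Baxter double product $R(x)y+xR(y)-xy$) this reads simply $R(x)R(y)=R(x\star y)$. Granting this, each of the six relations in $(\ref{E1})$, $(\ref{E3})$, $(\ref{E6})$ unwinds to one of three elementary facts about the ambient associative algebra $A$: associativity of its product, bilinearity of $(x,y)\mapsto xR(y)$ and of $(x,y)\mapsto R(x)y$, or the identity $R(x)R(y)=R(x\star y)$. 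For example $(x\prec y)\prec z=xR(y)R(z)=xR(y\star z)=x\prec(y\star z)$ and $(x\star y)\succ z=R(x\star y)z=R(x)R(y)z=x\succ(y\succ z)$ use the Rota--Baxter identity, while $(x\succ y)\prec z=R(x)yR(z)=x\succ(y\prec z)$, $(x\prec y)\bullet z=-xR(y)z=x\bullet(y\succ z)$ and the remaining two relations follow from associativity alone. I would also remark that $\bullet$, being $-1$ times the product of $A$, is associative (and commutative when $A$ is), so $(A,\bullet,\prec,\succ)$ is genuinely a NQSh algebra; the lines above, taken together, establish $(\ref{E1})$--$(\ref{E6})$.

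For functoriality, I would observe that a morphism $f$ of Rota--Baxter algebras is an algebra map satisfying $f\circ R=R'\circ f$, whence $f(x\prec y)=f(x)f(R(y))=f(x)R'(f(y))=f(x)\prec f(y)$ and similarly for $\succ$ and $\bullet$; thus $f$ is a morphism of NQSh algebras, and identities and composites are preserved on the nose. In the commutative case, commutativity of $A$ gives $x\succ y=R(x)y=yR(x)=y\prec x$, so $\succ$ carries no independent information; substituting $x\succ y=y\prec x$ into $(\ref{E1})$--$(\ref{E6})$ collapses the system to the two relations $(\ref{QS1})$ (the left identity of $(\ref{E1})$) and $(\ref{QS2})$ (the right identity of $(\ref{E6})$), so the functor restricts to one from commutative Rota--Baxter algebras to QSh algebras, as claimed.

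I do not expect a genuine obstacle here: the argument is bookkeeping. The only places needing mild care are fixing the sign of $\bullet$ so that the Rota--Baxter double product and the NQSh product $\star$ literally agree, and tracking on which argument $R$ acts while expanding each of the seven relations; past that, everything reduces to associativity of $A$.
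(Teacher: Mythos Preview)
Your verification is correct and follows exactly the direct computation the paper sketches in the paragraph preceding the lemma (the paper gives no separate proof beyond those lines and ``and so on''). You are also right to adjust the sign: with the paper's weight $-1$ convention $R(x)R(y)=R(R(x)y+xR(y)-xy)$, one needs $x\bullet y=-xy$ so that the NQSh product $\star=\prec+\succ+\bullet$ agrees with the Rota--Baxter double product and the key identity $R(x\star y)=R(x)R(y)$ holds; the lemma as printed has a sign slip, and the paper's own preceding calculations already use $\star$ with the minus sign.
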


We already alluded to the fact that, in a given quantum field theory, the set of linear forms from the linear span of Feynman diagrams (or equivalently algebra maps from the polynomial algebra they generate) to a commutative RB algebra of amplitudes carries naturally the structure of a noncommutative RB algebra. In the context of QSh algebras, this result generalizes as follows:

\begin{proposition}\label{convol}
Let $C$ be a (coassociative) coalgebra with coproduct $\Delta$ and $A$ be a NQSh algebra. Then the set of linear maps $Hom(C,A)$ is naturally equipped with the structure of a NQSh algebra by the products:
$$f\prec g (c):=f(c^{(1)})\prec g(c^{(2)}),$$
$$f\succ g (c):=f(c^{(1)})\succ g(c^{(2)}),$$
$$f\bullet g (c):=f(c^{(1)})\bullet g(c^{(2)}),$$
where we used Sweedler's notation $\Delta(c)=c^{(1)}\otimes c^{(2)}$.
\end{proposition}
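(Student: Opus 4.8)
The plan is to exploit the standard principle that convolution lifts algebraic identities. Writing each of the three products on $Hom(C,A)$ in the form $f\diamond g = m_\diamond\circ(f\otimes g)\circ\Delta$, where $m_\prec,m_\succ,m_\bullet:A\otimes A\to A$ are the structure maps of the NQSh algebra $A$, every one of the defining relations (\ref{E1}), (\ref{E3}), (\ref{E6}) becomes, after evaluating both sides on $c\in C$ and using coassociativity of $\Delta$, the termwise image under $f\otimes g\otimes h$, applied to $\Delta^{[3]}(c)$, of the same relation holding in $A$. This is exactly the mechanism by which $(\mathcal{L}(H,A),\ast)$ is seen to be associative when $A$ is, and it requires no new idea.

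Concretely, I would first record that the derived product $\star=\prec+\succ+\bullet$ on $Hom(C,A)$ is again of convolution type, $f\star g=m_\star\circ(f\otimes g)\circ\Delta$ with $m_\star=m_\prec+m_\succ+m_\bullet$, since all three products are built from the same coproduct $\Delta(c)=c^{(1)}\otimes c^{(2)}$. Then, for a representative relation such as the first identity in (\ref{E1}), I would write
\[(f\prec g)\prec h = m_\prec\circ(m_\prec\otimes\tmop{Id})\circ(f\otimes g\otimes h)\circ(\Delta\otimes\tmop{Id})\circ\Delta,\qquad f\prec(g\star h)=m_\prec\circ(\tmop{Id}\otimes m_\star)\circ(f\otimes g\otimes h)\circ(\tmop{Id}\otimes\Delta)\circ\Delta.\]
By coassociativity $(\Delta\otimes\tmop{Id})\circ\Delta=(\tmop{Id}\otimes\Delta)\circ\Delta=\Delta^{[3]}$, so the two sides differ only in $m_\prec\circ(m_\prec\otimes\tmop{Id})$ versus $m_\prec\circ(\tmop{Id}\otimes m_\star)$, and these agree as maps $A^{\otimes 3}\to A$ precisely because (\ref{E1}) holds in $A$. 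The remaining five identities in (\ref{E1}), (\ref{E3}), (\ref{E6}) — and the associativity of $\bullet$ on $Hom(C,A)$, which reduces to associativity of $\bullet$ on $A$ — follow by the identical argument, each reduced to its $A$-counterpart. As already noted in the excerpt, the NQSh axioms in $A$ force $m_\star$ to be associative, so no separate check is needed for $\star$ on $Hom(C,A)$.

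The only point requiring care — bookkeeping rather than a genuine obstacle — is tracking, in each of the seven relations, which Sweedler legs carry the nested product and which carry the derived product $\star$, i.e. matching the parenthesization of the relation in $A$ with the correct factorization of $\Delta^{[3]}$ through coassociativity. I would organize this by systematically rewriting both sides of each axiom as a single composite of the shape $(\text{composite of }m\text{'s})\circ(f\otimes g\otimes h)\circ\Delta^{[3]}$, reducing the whole verification to a short table whose seven entries point to the seven lines of the NQSh definition. Note that no hypothesis on $C$ beyond coassociativity (in particular no counit and no cocommutativity) and no unitality of $A$ is used anywhere.
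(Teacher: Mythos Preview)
Your argument is correct and is exactly the mechanism the paper invokes: it proves the proposition in one sentence by observing that the NQSh relations are ``non-symmetric'' (the variables $x,y,z$ appear in the same order on both sides of each axiom), which is precisely what makes your reduction via coassociativity to $(\text{composite of }m\text{'s})\circ(f\otimes g\otimes h)\circ\Delta^{[3]}$ go through. You have simply unpacked that remark in detail.
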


The proposition follows from the fact that the relations defining NQSh algebras are non-symmetric (in the sense that they do not involve permutations:
for example, in the equation
$(x\prec y)\prec z=x\prec(y\star z)$, the letters $x,y,z$ appear in the same order in the left and right hand side, and similarly for the other defining relations). 

\section{The quasi-shuffle Hopf algebra $QSh(A)$.}
\label{sect:2}

For details on the constructions in this section, we refer the reader to
{\cite{hof,fpQSh,hi}}.
Let $A$ be an associative algebra. We write $QSh(A)$ for the graded vector space
$QSh(A) = \bigoplus_{n \geq 0} QSh(A)_{ n }=k\oplus\bigoplus_{n \geq 1} QSh(A)_{ n }=:k\oplus QSh^+(A)$ where, for $n
\geq 1$, $QSh(A)_{n} = A^{\otimes n}$ and  $QSh(A)_{0} =
k$ (notice that when $A$ is unital, one has to distinguish between $1\in k=QSh(A)_{0}$ and $1_A\in A\subset QSh(A)_1$). We denote $l (\tmmathbf{a}) = n$ the length of an element
$\tmmathbf{a}$ of $QSh(A)_{n}$. 

For convenience, an element
$\tmmathbf{a}= a_1 \otimes \ldots \otimes a_n$ of $QSh(A)$ will be called
a word and will be written $a_1\dots a_n$ (it should not be confused with the product of the $a_i$ in $A$). We will reserve the tensor product notation for the tensor product of elements of $QSh(A)$ (so that for example, $a_1a_2\otimes a_3\in QSh(A)_2\otimes QSh(A)_1$). Also, we distinghish between the concatenation product of words (written $\cdot$: $a_1a_2a_3\cdot b_1b_2=a_1a_2a_3b_1b_2$) and the product in $A$ by writing $a\cdot_A b$ the product of $a$ and $b$ in $A$ (whereas $a\cdot b$ would stand for the word $ab$ of length 2).

The graded vector space $QSh^+(A)$ (resp. $QSh(A)$) is given  a graded (resp. unital) NQSh algebra structure by induction on the length of tensors such that for all $a,b \in A$, for all $v,w \in QSh(A)$:
$$
av\prec bw=a(v\star bw),$$
$$av \succ bw=b(av \star w),$$
$$av \bullet bw=(a._Ab)(v \star w),$$
where $\qshuffle:=\star=\prec+\succ+\bullet$ is usually called the quasi-shuffle (or stuffle) product (by definition: $\forall v\in QSh(A), 1\qshuffle v=v=v\qshuffle 1$). 
Notice that this product $\qshuffle$ can be defined directly by the two equivalent inductions
$$av\qshuffle bw:=a(v\qshuffle bw)+b(av\qshuffle w)+a\cdot_Ab (v\qshuffle w)$$
or
$$va\qshuffle wb:=(v\qshuffle bw)a+(av\qshuffle w)b+ (v\qshuffle w)a\cdot_Ab.$$

When $A$ is commutative, $QSh(A)$ is a unital quasi-shuffle algebra

For example :
\begin{equation}
  a_1 a_2 \qshuffle b  = a_1 
  a_2  b + a_1  b  a_2 + b a_1  a_2
  + a_1 ( a_2\cdot_A b)+ (a_1\cdot_A b)a_2
\end{equation}
Notice at last that, under the action of the four products $\prec, \succ,\star,\bullet$, the image of
$ QSh(A)_{r } \otimes QSh(A)_{
  s } $ is contained in $ \bigoplus_{t = \max ( r, s )}^{r + s} QSh(A)_{ t }$

One can also define :
\begin{itemize}
  \item a counit $\eta : QSh(A) \rightarrow k$ by
  $\eta ( 1 ) := 1$ and for $s \geq 1$, $\eta
  ( a_1 \ldots  a_s ) = 0$,
  
  \item a coproduct (called deconcatenation coproduct) $\Delta : QSh(A) \rightarrow
  QSh(A) \otimes QSh(A)$ such that
  $\Delta( 1) = 1\otimes 1$
  and for $s \geq 1$ and $\tmmathbf{a}= a_1  \ldots  a_s \in
  QSh(A)_{s}$,
  \begin{equation}
    \Delta ( \tmmathbf{a} ) = \tmmathbf{a} \otimes
    1 + 1 \otimes\tmmathbf{a} + \sum_{r = 1}^{s -
    1} ( a_1  \ldots  a_r ) \otimes( a_{r + 1}
     \ldots  a_s )
  \end{equation}
\end{itemize}
making $QSh(A)$ a graded coalgebra.
It is a matter of fact to check that $QSh(A)$ is a unital conilpotent bialgebra
(and thus a Hopf algebra, see e.g. \cite{Cartier2}), 
which is called the quasi-shuffle or stuffle Hopf algebra on $A$ (this terminology, that we adopt, is convenient, usual, but slightly misleading because when $A$ is only associative, $QSh(A)$ is a unital noncommutative quasi-shuffle algebra).

\section{Operations and universal properties}\label{sect:3}
Let us focus now in the first part of this section on the case relevant to renormalization, that is when $A$ is commutative but not necessarily unital. It follows then from standard arguments in universal algebra that, given a quasi-shuffle algebra $B$, morphisms of quasi-shuffle algebras from $QSh^+(A)$ to $B$ are naturally in bijection with morphisms of (non unital) algebras from $A$ to $B$:
$$Hom_{QSh}(QSh^+(A),B)\cong Hom_{Alg}(A,B).$$
In categorical terms (see \cite{fpQSh} for a direct and elementary proof):

\begin{proposition}[Quasi-shuffle PBW theorem] The left adjoint $U$ of the
forgetful functor from the category of quasi-shuffle algebras $QSh$ to the category of non unital commutative algebras $Com$, or "quasi-shuffle enveloping algebra"
functor from $Com$ to $QSh$, is  Hoffman's quasi-shuffle
algebra functor $A\longmapsto QSh^+(A)$.\end{proposition}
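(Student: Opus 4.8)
The plan is to make the adjunction explicit. The unit at a non-unital commutative algebra $A$ is the map $\iota_A\colon A\to (QSh^+(A),\bullet)$ sending $a$ to the length-one word $a$; this is a morphism of commutative algebras, since reading the defining induction $av\bullet bw=(a\cdot_A b)(v\star w)$ with $v=w=1$ gives $a\bullet b=a\cdot_A b$, while $\bullet$ on $QSh^+(A)$ is commutative because $\cdot_A$ and $\star$ are (recall from Section \ref{sect:2} that $QSh^+(A)$ is genuinely a quasi-shuffle algebra when $A$ is commutative). It then suffices to prove the universal property: for every quasi-shuffle algebra $B$ and every morphism of non-unital commutative algebras $f\colon A\to (B,\bullet_B)$ there is a unique morphism of quasi-shuffle algebras $\tilde f\colon QSh^+(A)\to B$ with $\tilde f\circ\iota_A=f$; naturality in $A$ and $B$ then follows formally from uniqueness.

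For uniqueness, the key point is that $QSh^+(A)$ is generated as a quasi-shuffle algebra by $QSh^+(A)_1=\iota_A(A)$, in fact using only the product $\prec$. Indeed, taking $v=1$ in $av\prec bw=a(v\star bw)$ and using $1\star w=w$ gives $a\prec w=a\cdot w$ for a letter $a$ and any nonempty word $w$, whence by induction on length
\[a_1\cdots a_n=a_1\prec(a_2\prec(\cdots\prec(a_{n-1}\prec a_n)\cdots)).\]
Consequently a quasi-shuffle morphism out of $QSh^+(A)$ is determined by its restriction to $QSh^+(A)_1$, which forces $\tilde f(a_1\cdots a_n)=f(a_1)\prec_B(f(a_2)\prec_B(\cdots\prec_B f(a_n)\cdots))$.

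For existence one defines $\tilde f$ by exactly this formula and checks, by induction on $l(x)+l(y)$, that $\tilde f$ is compatible with $\prec$, with $\bullet$, and with $\star$ (the last is a formal consequence of the first two, via $\star=\prec+\succ+\bullet$ and $x\succ y=y\prec x$, but it is convenient to carry it along: at each total length one first treats $\prec$ and $\bullet$, then $\star$). The base case $l(x)=l(y)=1$ uses only that $f$ preserves $\bullet$. In the inductive step one substitutes the recursions for $\prec,\bullet,\star$ on $QSh^+(A)$ --- for instance $x\bullet y=(a\cdot_A b)(v\star w)$ when $x=av$, $y=bw$ --- into the definition of $\tilde f$ and then reassembles the result using the quasi-shuffle axioms (\ref{QS1}), (\ref{QS2}) together with the commutativity and associativity of $\bullet$ in $B$; concretely, the case of $\bullet$ reduces to the identity $(p\bullet q)\prec(s\star t)=(p\prec s)\bullet(q\prec t)$ in $B$, which in turn follows from (\ref{QS1}) and (\ref{QS2}). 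This reassembly is the only genuine computation, and it is precisely here that the defining relations of a quasi-shuffle algebra are used --- which is also why the statement would fail if $\bullet$ were replaced by an arbitrary bilinear map.

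Finally, for naturality: given $g\colon B\to B'$ in $QSh$, the morphisms $g\circ\tilde f$ and $\widetilde{g\circ f}$ agree on $QSh^+(A)_1$, hence coincide; given $h\colon A'\to A$ in $Com$, one has $QSh^+(h)=\widetilde{\iota_A\circ h}$ (which records the functoriality of $QSh^+$ on morphisms), and $\tilde f\circ QSh^+(h)$ and $\widetilde{f\circ h}$ likewise agree on $QSh^+(A')_1$. Hence $A\mapsto QSh^+(A)$ is left adjoint to the forgetful functor $QSh\to Com$, which is the assertion. The main obstacle is the inductive compatibility check of the existence step; the generation lemma underlying uniqueness is, by contrast, a one-line computation.
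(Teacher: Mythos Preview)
Your argument is correct and is precisely the ``direct and elementary proof'' the paper alludes to but does not spell out (it defers to \cite{fpQSh} and to ``standard arguments in universal algebra''). The generation lemma you use for uniqueness, namely $a_1\cdots a_n=a_1\prec(a_2\prec(\cdots\prec a_n))$, is exactly the Sch\"utzenberger trick the paper singles out in the paragraph following the proposition, and your formula for $\tilde f$ is the counit of the adjunction described there; so your approach and the paper's intended one coincide. One small remark: the identity $(p\bullet q)\prec(s\star t)=(p\prec s)\bullet(q\prec t)$ does use the commutativity of $\bullet$ (and of $\star$) in addition to (\ref{QS1}) and (\ref{QS2}); you implicitly rely on this, and it is worth saying so, since this is where the hypothesis that $B$ is a genuine (commutative) quasi-shuffle algebra, not merely an NQSh algebra, enters.
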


It is interesting to analyse the concrete meaning of this Proposition. Let us consider first the counit of the adjunction, that is the quasi-shuffle algebra map from $QSh^+(A)$ to $A$, when $A$ is a quasi-shuffle algebra.
By definition of $\prec$, the element $a_1\dots a_n\in QSh(A)_n$ can be rewritten (in $QSh(A)$) $a_1\prec (a_2\prec \dots (a_{n-1}\prec a_n))$. The trick goes back to Sch\"utzenberger who used it in his seminal but not enough acknowledged study of shuffle algebras \cite{schutz}. 
It follows that the counit of the adjunction maps $a_1\dots a_n\in QSh(A)_n$ to $a_1\prec (a_2\prec \dots (a_{n-1}\prec a_n))$ (computed now in $A$).

Let us move now to the case when $A$ is a commutative RB algebra. Then, $A$ is in particular a quasi-shuffle algebra with $a\prec b:=aR(b)$. The counit of the same adjunction is then the map that sends $a_1\dots a_n\in QSh(A)_n$ to $a_1R(a_2 R(a_3 \dots R(a_{n-1}R( a_n)))$. In particular, $a^n$ is mapped to $aR(a R(a \dots R(aR( a)))$ -a term that is known to play a key role in renormalization, see in particular \cite{emp}.

This relatively standard adjunction analysis can be completed in the case we are interested in (maps from $QSh^+(A)$ to $B$, when $B$ is a quasi-shuffle algebra), due to the existence of a Hopf algebra structure on $QSh(A)$.
According to Proposition \ref{convol}, we have first that 
\begin{lemma} Let $A$ be an associative algebra and $B$ a NQSh algebra, the vector space of linear morphisms ${\mathcal L}(
QSh(A), B )$ is a NQSh algebra.\end{lemma}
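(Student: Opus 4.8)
The plan is to read this as an immediate corollary of the two results already in hand: Proposition \ref{convol} (which equips $Hom(C,A)$ with an NQSh structure whenever $C$ is a coassociative coalgebra and $A$ is an NQSh algebra), and the bialgebra structure on $QSh(A)$ established at the end of Section \ref{sect:2}. Concretely, I would first observe that $QSh(A)$ carries a coassociative coproduct, namely the deconcatenation coproduct $\Delta$; this is exactly what was checked when we verified that $QSh(A)$ is a conilpotent bialgebra. So $QSh(A)$ qualifies as a legitimate input coalgebra $C$ for Proposition \ref{convol}, and $B$ is an NQSh algebra by hypothesis. Applying Proposition \ref{convol} verbatim with $C:=QSh(A)$ then produces three bilinear products $\prec,\succ,\bullet$ on ${\mathcal L}(QSh(A),B)=Hom(QSh(A),B)$ defined by $f\prec g(\mathbf{a}):=f(\mathbf{a}_{(1)})\prec g(\mathbf{a}_{(2)})$ and similarly for $\succ$ and $\bullet$, where $\Delta\mathbf{a}=\mathbf{a}_{(1)}\otimes\mathbf{a}_{(2)}$ is the deconcatenation coproduct.

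Next I would point out why these products satisfy the NQSh axioms (\ref{E1})--(\ref{E6}): this is precisely the content of the remark following Proposition \ref{convol}, namely that the defining relations of an NQSh algebra are non-symmetric (the variables $x,y,z$ appear in the same left-to-right order on both sides of every axiom). Because of this, transporting a relation from $B$ to $Hom(C,B)$ only requires coassociativity of $\Delta$ — one evaluates both sides on $\mathbf{a}$, uses $\Delta^{[3]}\mathbf{a}=\mathbf{a}_{(1)}\otimes\mathbf{a}_{(2)}\otimes\mathbf{a}_{(3)}$, and the identity in $B$ applied to the three arguments $f(\mathbf{a}_{(1)}),g(\mathbf{a}_{(2)}),h(\mathbf{a}_{(3)})$ closes the argument. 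No symmetry of the coproduct (cocommutativity) is needed, which is good because the deconcatenation coproduct is not cocommutative. I would verify one representative axiom in detail, say $(f\prec g)\prec h=f\prec(g\star h)$, and then assert that the remaining five follow identically.

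Since the statement of the lemma asserts nothing more than "${\mathcal L}(QSh(A),B)$ is a NQSh algebra," and Proposition \ref{convol} already produces exactly that structure once we supply a coalgebra, the proof is essentially a matter of recording that $QSh(A)$ is such a coalgebra. The only point requiring care — and the place I would expect a referee to look — is the matter of units: Proposition \ref{convol} as stated deals with non-unital NQSh algebras, whereas $QSh(A)$ is a unital coalgebra and one may want ${\mathcal L}(QSh(A),B)$ or its relevant subspace to inherit the partial unit described in Section \ref{sect:1}. I would handle this by working with the reduced coproduct $\Delta'$ on $QSh^+(A)$ (or equivalently by restricting attention to the maps that respect the splitting $QSh(A)=k\oplus QSh^+(A)$), exactly as the paper does when it passes between NQSh algebras and unital NQSh algebras; the equivalence of those two categories recorded earlier makes this harmless. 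This unit bookkeeping is the only genuine, if minor, obstacle; the algebraic core is a direct quotation of Proposition \ref{convol}.
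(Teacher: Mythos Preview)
Your proposal is correct and matches the paper's approach exactly: the paper simply states the lemma as an immediate consequence of Proposition~\ref{convol}, prefacing it with ``According to Proposition~\ref{convol}, we have first that\ldots'' and giving no further argument. Your additional remarks on the non-symmetric nature of the NQSh axioms and the unit bookkeeping are accurate elaborations of points the paper leaves implicit.
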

   
Furthermore, by properties that hold for arbitrary maps from a conilpotent Hopf algebra to an algebra, if $B$ is unital, the set of linear maps that map the unit of $QSh(A)$ to the unit of $B$, $\mathcal{U}( QSh(A), B )$ is a group for the product $\star$.
Moreover, when $B$ is commutative, the subset of algebra maps from $QSh(A)$ to $B$, $\mathcal{C} ( QSh(A), B )$, is a
subgroup.

Next, notice that the functor $QSh$ is compatible with Hopf algebra structures: an algebra map $l$ from $A$ to $B$  induces a map $QSh(l)$ of quasi-shuffle algebras from $QSh(A)$ to $QSh(B)$ defined by
\[ QSh(l)( 1 ) = 1 \hspace{1em} \tmop{and}
   \hspace{1em} QSh(l) ( a_1  \ldots  a_r ) = l ( a_1 )
    \ldots  l ( a_r ) \hspace{1em} ( r \geq 1 ) \]
and therefore $\Delta \circ QSh(l) = ( QSh(l)
\otimes QSh(l )) \circ \Delta$. In particular, $QSh(l)$ is a Hopf algebra morphism.

The last universal property of the $QSh$ functor that we would like to emphasize is more intriguing and does not seem to have been noticed before. Whereas $QSh$ is naturally a left adjoint, it also happens indeed to be a right adjoint, a property that will prove essential in our later developments.

\begin{theorem}\label{mainthm}
Let $H$ be a conilpotent Hopf algebra and $A$ be a unital associative algebra, then we have a natural isomorphism between (unital) algebra maps from $H$ to $A$ and Hopf algebra maps from $H$ to $QSh(A)$:
$$Hom_{Alg}(H,A)\cong Hom_{Hopf}(H,QSh(A)).$$
\end{theorem}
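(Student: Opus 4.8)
The plan is to construct the bijection explicitly in both directions and check it is natural. Starting from a unital algebra map $\phi\colon H\to A$, I would define a map $\widehat\phi\colon H\to QSh(A)$ by using the conilpotent coproduct of $H$ together with the reduced coproduct: concretely, for $h\in H^+$ set
\[
\widehat\phi(h)=\sum_{n\geq 1}(\phi\otimes\cdots\otimes\phi)\circ{\Delta'}^{[n]}(h)\ \in\ \bigoplus_{n\geq 1}A^{\otimes n}=QSh^+(A),
\]
and $\widehat\phi(1_H)=1$. The sum is finite on each $h$ by conilpotency, so this is well-defined. The first key step is to show $\widehat\phi$ is a coalgebra map: this should follow from the standard fact that the iterated reduced coproducts ${\Delta'}^{[n]}$ assemble, after applying $\phi^{\otimes n}$, into something compatible with deconcatenation — precisely because deconcatenation on $QSh(A)$ is the ``cofree'' coproduct and $H$ being conilpotent means $\widehat\phi$ is exactly the map to the cofree coalgebra cogenerated by the corestriction of $\phi$ to $H^+$. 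The second key step, and the one I expect to be the main obstacle, is that $\widehat\phi$ is an \emph{algebra} map into $(QSh(A),\qshuffle)$: one must verify $\widehat\phi(gh)=\widehat\phi(g)\qshuffle\widehat\phi(h)$. This is where the quasi-shuffle (rather than plain shuffle) product is forced: the $\bullet$-part of $\qshuffle$, which multiplies tensor entries \emph{in $A$}, must absorb exactly the ``overlap'' terms produced when one expands ${\Delta'}^{[n]}(gh)$ using that $\Delta'$ is almost a morphism for the product of $H$ (the defect being the terms $g_{(1)}h\otimes g_{(2)}$, $gh_{(1)}\otimes h_{(2)}$, etc.) combined with the multiplicativity of $\phi$. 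I would prove this by induction on the conilpotency degree, carefully matching the recursive formula $av\qshuffle bw=a(v\qshuffle bw)+b(av\qshuffle w)+a\cdot_A b\,(v\qshuffle w)$ against the three types of terms in $\Delta(gh)=\Delta(g)\Delta(h)$ after projecting to $H^+\otimes H^+$.

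For the inverse direction, given a Hopf algebra map $\Psi\colon H\to QSh(A)$, I would recover an algebra map $H\to A$ by post-composing with the canonical algebra map $j\colon QSh(A)\to A$, $j(a_1)=a_1$, $j(a_1\cdots a_s)=0$ for $s\geq 2$ — wait, $j$ is not an algebra map; rather the correct choice is the counit-of-adjunction map $QSh^+(A)\to A$ only when $A$ is a quasi-shuffle algebra. Since here $A$ is merely associative, instead I would use the projection $\pi_1\colon QSh(A)\to A$ onto the length-one component and set $\phi:=\pi_1\circ\Psi$; the claim is that $\phi$ is automatically an algebra map because $\Psi$ is a coalgebra map and $\pi_1$ picks out the ``primitive-like'' part, so that the multiplicativity defect of $\pi_1$ is killed by $\Psi$ landing in the image of a Hopf algebra map. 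More precisely, for a Hopf (hence coalgebra) map $\Psi$, $\pi_1\circ\Psi$ equals the composition of $\Psi$ with the projection onto $QSh(A)_1=A$, and one checks on $H^+$ that $\pi_1\Psi(gh)=\pi_1(\Psi(g)\qshuffle\Psi(h))$; expanding $\qshuffle$, only the length-one output survives, and the length-one part of $x\qshuffle y$ for $x,y\in QSh^+(A)$ is $\pi_1(x)\cdot_A\pi_1(y)$ plus contributions that vanish unless one of $x,y$ is the unit. This gives $\phi(gh)=\phi(g)\cdot_A\phi(h)$.

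Finally I would check the two constructions are mutually inverse: $\pi_1\circ\widehat\phi=\phi$ is immediate since only the $n=1$ term of $\widehat\phi$ has length one and ${\Delta'}^{[1]}=\mathrm{Id}$ on $H^+$; conversely $\widehat{\pi_1\circ\Psi}=\Psi$ requires showing a Hopf algebra map into a cofree coalgebra is determined by its corestriction to the cogenerators, which is the universal property of $QSh(A)$ as a cofree conilpotent coalgebra (stated implicitly in Section~\ref{sect:2} and used in Lemma~1's style of argument). Naturality in both $H$ and $A$ is routine: for $A$ one uses the functoriality of $QSh$ recorded just before the theorem ($QSh(l)$ is a Hopf map), and for $H$ one uses that $\widehat{\phantom{\phi}}$ is defined purely through the coproduct and $\phi$. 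I expect the only genuinely delicate point to be the algebra-map verification in the first paragraph — reconciling the non-cocommutative, non-morphism behavior of $\Delta'$ on products with the quasi-shuffle recursion — and I would isolate it as a lemma proved by induction on conilpotency degree.
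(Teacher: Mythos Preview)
Your proposal is essentially correct and follows the same architecture as the paper: build $\widehat\phi$ from the iterated reduced coproducts, invoke the cofree conilpotent coalgebra property of $QSh(A)$ for the coalgebra-map part, and recover $\phi$ by projecting to tensor length one. One correction: your aside ``$j$ is not an algebra map'' is false --- the paper records (Lemma just after the theorem) that $j:(QSh(A),\qshuffle)\to A$ \emph{is} a unital algebra morphism, which is exactly why post-composing a Hopf map with $j$ (equivalently your $\pi_1$) immediately yields an algebra map; your subsequent ad hoc computation of $\pi_1(\Psi(g)\qshuffle\Psi(h))$ is unnecessary once you know this. The one genuine methodological difference is in the ``hard'' step, showing $\widehat\phi$ is multiplicative for $\qshuffle$: you propose an induction on conilpotency degree matched against the quasi-shuffle recursion, whereas the paper instead proves a closed combinatorial lemma expressing $\Delta^{[n]}(h)$ as a sum over increasing injections $f\in Inj(i,n)$ applied to ${\Delta'}^{[i]}(h)$, and then identifies the terms of $\sum_n\phi^{\otimes n}{\Delta'}^{[n]}(hh')$ with those of $\widehat\phi(h)\qshuffle\widehat\phi(h')$ directly from $\Delta^{[n]}(hh')=\Delta^{[n]}(h)\cdot\Delta^{[n]}(h')$. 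The paper's route avoids the bookkeeping you anticipate (tracking the ``defect'' of $\Delta'$ on products through an induction), at the cost of a somewhat opaque injection-indexed formula; your inductive route would work but, as you rightly flag, is the delicate point and would need care to execute cleanly.
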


Indeed, $QSh(A)$ is, as a coalgebra, the cofree coalgebra over $A$ (viewed as a vector space) in the category of conilpotent coalgebras. These properties are dual to the ones of tensor algebras (more familiar, but equivalent up to the fact that the dual of a coalgebra is an algebra but the converse is not always true -this is the reason for the conilpotency hypothesis): the tensor algebra over a vector space $V$ is, when equipped with the concatenation product, the free associative algebra over $V$.
There is therefore a natural isomorphism between linear maps from  the kernel $C^+$ of the counit of a coaugmented conilpotent coalgebra $C$ to $A$ and coalgebra maps from $C$ to $QSh(A)$
$${\mathcal L}(C^+,A)\cong Hom_{Coalg}(C,QSh(A)).$$
Coaugmented means that there is a coalgebra map from the ground field to $C$, insuring that $C$ decomposes as the direct sum of $k$ and of the kernel of the counit (as happens for a Hopf algebra, for which the composition of the unit and the counit is a projection on the ground field orthogonally to the kernel of the counit). 

The isomorphism is given explicitly as follows: it maps $\phi\in {\mathcal L}(C^+,A)$ to $\tilde\phi :=\sum\limits_{i=0}^\infty \phi^{\otimes n}\circ {\Delta'}^{[n]}$ (where $\phi^{\otimes 0}\circ{\Delta'}^{[0]}$ stands for the composition of the counit of $C$ with the unit of $QSh(A)$).
In particular, the map $\phi$ factorizes as (the restriction to $C^+$ of) $j\circ \tilde \phi$, where
$j \in {\mathcal L}( QSh(A), A )$  is defined by $j (
1 ) = 1_A$, $j ( a_1 ) = a_1$ and $j ( a_1  \ldots  a_r
) = 0$ if $r \geq 2$.

To prove the Theorem, it is therefore enough to show that, when a linear map $\phi$ from $H^+$ to $A$ is the restriction to $H^+$ of an algebra map from $H$ to $A$, then the induced map $\tilde\phi$ is also an algebra map (since we already know it is a coalgebra map).
Concretely, we have to prove that, for $h,h'\in H^+$, $\tilde\phi (hh')=\tilde\phi(h)\qshuffle \tilde\phi (h')$. The Theorem will then follow if we prove that 
$$\sum\limits_{n=1}^\infty \phi^{\otimes n}\circ {\Delta'}^{[n]}(hh')=\sum\limits_{p=1}^\infty\phi^{\otimes p}\circ {\Delta'}^{[p]}(h)\qshuffle \sum\limits_{q=1}^\infty \phi^{\otimes p}\circ {\Delta'}^{[q]}(h').$$
Using that $\phi$ and that $\Delta$ are algebra maps,   this follows from the following Lemma (where, to avoid ambiguities, we use the notation ${\Delta'}^{[p]}(h)=h_{(1,p)}'\otimes \dots \otimes h_{(p,p)}'$) by identification of the terms in the left and right hand side.

\begin{lemma} We have, for the iterated coproduct and $h\in H^+$,
$$\Delta^{[n]}(h)=\sum_{i=1}^n \sum_{f\in Inj(i,n)}f_\ast (h_{(1,i)}'\otimes \dots \otimes h_{(i,i)}'),$$
where $Inj(i,n)$ stands for the set of increasing injections from $[i]:=\{1,\dots ,i\}$ to $[n]$ and 
$$f_\ast (h_{(1,i)}'\otimes \dots \otimes h_{(i,i)}')=l_{(1)}\otimes \dots \otimes l_{(n)}$$ with $l_{(q)}:=h'_{(p,i)}$ if $q=f(p)$ and $l_{(q)}:=1$ if $q$ is not in the image of $f$.
\end{lemma}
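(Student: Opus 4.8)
The plan is to prove the identity by induction on $n$, the number of tensor factors on the left-hand side, using the recursive definition $\Delta^{[n+1]}=(\mathrm{Id}\otimes\Delta^{[n]})\circ\Delta$ together with the corresponding recursion for the reduced coproduct. The base case $n=1$ is immediate: $Inj(1,1)=Inj(i,1)$ forces $i=1$ and $f=\mathrm{id}$, and the formula reads $\Delta^{[1]}(h)=h=h'_{(1,1)}$ since, on $H^+$, the degree-one piece of $\Delta'$ is just the identity. For the inductive step I would write $\Delta(h)=h\otimes 1+1\otimes h+\Delta'(h)$ and apply $\mathrm{Id}\otimes\Delta^{[n]}$; the term $h\otimes 1$ contributes $h\otimes 1^{\otimes n}$, which is exactly the summand coming from the unique increasing injection $[1]\hookrightarrow[n+1]$ with image $\{1\}$ applied to $\Delta'^{[1]}(h)$, while the term $1\otimes h$ contributes $1\otimes\Delta^{[n]}(h)$, which by the induction hypothesis expands into all summands indexed by increasing injections whose image avoids $1$. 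The genuinely substantial term is $(\mathrm{Id}\otimes\Delta^{[n]})\circ\Delta'(h)=\sum h'_{(1)}\otimes\Delta^{[n]}(h'_{(2)})$.

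For that term I would use coassociativity of the reduced coproduct in the form $(\mathrm{Id}\otimes\Delta'^{[p-1]})\circ\Delta' = \Delta'^{[p]}$ after first rewriting $\Delta^{[n]}(h'_{(2)})$ via the induction hypothesis; alternatively, and more cleanly, I would apply the induction hypothesis directly to rewrite $\Delta^{[n]}$ of the right tensor slot, so that $(\mathrm{Id}\otimes\Delta^{[n]})\circ\Delta'(h)$ becomes a sum over $i'$ and over increasing injections $g\in Inj(i',n)$ of $h'_{(1)}$ tensored with $g_\ast$ applied to the iterated \emph{reduced} coproduct of $h'_{(2)}$. Since $h'_{(1)}\otimes\Delta'^{[i']}(h'_{(2)})=\Delta'^{[i'+1]}(h)$, this is $\Delta'^{[i]}(h)$ with $i=i'+1$, distributed into position $1$ together with positions $1+g(1)<\dots<1+g(i')$; i.e. it is $f_\ast\Delta'^{[i]}(h)$ for the injection $f\in Inj(i,n+1)$ with $f(1)=1$ and $f(p)=1+g(p-1)$ for $p\ge 2$. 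Running over all $i'$ and $g$ produces precisely all increasing injections $[i]\hookrightarrow[n+1]$ whose image \emph{contains} $1$, which together with the injections avoiding $1$ from the $1\otimes h$ term gives the full set $\bigcup_i Inj(i,n+1)$, with no summand counted twice since an injection either contains $1$ in its image or it does not.

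The main obstacle I anticipate is the bookkeeping: matching the index $i$ on the left with the pair $(1,i-1)$ or with $i$ itself on the right, and checking that the ``stuffing with $1$'s'' operation $f_\ast$ is compatible with the splitting of $[n+1]$ as $\{1\}\sqcup\{2,\dots,n+1\}$ — in particular that $f_\ast$ of a reduced-coproduct tensor, when $f$ places nothing in slot $1$, genuinely returns the summand $1\otimes(\text{something})$ with the ``something'' being $g_\ast$ of the same tensor over $[n]$, i.e. that the recursion for $f_\ast$ mirrors the recursion $\Delta^{[n+1]}=(\mathrm{Id}\otimes\Delta^{[n]})\circ\Delta$. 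Once the two recursions are seen to coincide term by term, uniqueness of the solution of the recursion forces the claimed closed form, and the only care needed is to ensure that the degenerate summands (those containing $1_H$ in some slot) are correctly produced by the $h\otimes 1$ and $1\otimes h$ pieces of $\Delta$ rather than being double-counted or omitted.
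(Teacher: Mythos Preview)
Your induction on $n$ is correct and is exactly the natural argument: split $\Delta(h)=h\otimes 1+1\otimes h+\Delta'(h)$, apply $\mathrm{Id}\otimes\Delta^{[n]}$, and observe that the three resulting terms correspond precisely to (i) the unique injection $[1]\hookrightarrow[n+1]$ hitting position $1$, (ii) all injections missing position $1$, and (iii) all injections of size $\ge 2$ hitting position $1$, using $(\mathrm{Id}\otimes\Delta'^{[i-1]})\circ\Delta'=\Delta'^{[i]}$. The paper itself does not give a proof of this lemma; it only illustrates the identity for $n=1$ and $n=2$ and leaves the general verification to the reader, so your argument supplies what the paper omits.
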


For example, $\Delta^{[1]}(h)={\Delta'}^{[1]}(h)=h=h_{(1,1)}'$, $$\Delta^{[2]}(h)=\Delta(h)=h_{(1,1)}'\otimes 1+1\otimes h_{(1,1)}'+h_{(1,2)}'\otimes  h_{(2,2)}'$$ and
$$\begin{array}{rcl}
\Delta^{[2]}(hk)&=&\Delta^{[2]}(h)\Delta^{[2]}(k) \\
&=& (h_{(1,1)}'\otimes 1+1\otimes h_{(1,1)}'+h_{(1,2)}'\otimes  h_{(2,2)}')\\
& &\ \ \times \ (k_{(1,1)}'\otimes 1+1\otimes k_{(1,1)}'+k_{(1,2)}'\otimes  k_{(2,2)}'),\end{array}$$
so that 
$${\Delta'}^{[2]}(hk)=h_{(1,1)}'\otimes k_{(1,1)}'+k_{(1,1)}'\otimes h_{(1,1)}'+h_{(1,1)}'k_{(1,2)}'\otimes  k_{(2,2)}'+
k_{(1,2)}'\otimes  h_{(1,1)}'k_{(2,2)}'$$
$$+h_{(1,2)}'k_{(1,1)}'\otimes  h_{(2,2)}'  
+h_{(1,2)}'\otimes  h_{(2,2)}'k_{(1,1)}'+h_{(1,2)}'k_{(1,2)}'\otimes  h_{(2,2)}'k_{(2,2)}',$$
where one recognizes the tensor degree $2$ component of $$({\Delta'}^{[1]}(h)+{\Delta'}^{[2]}(h))\qshuffle ({\Delta'}^{[1]}(k)+{\Delta'}^{[2]}(k)).$$

The Theorem has an important corollary, that we state also as a Theorem in view of its importance for our approach to renormalization.

\begin{theorem}
Let $H$ be a conilpotent bialgebra, then, the unit, written $\iota$, of the adjunction in the previous Theorem, ($\iota(1):=1$ and
$\forall h\in H^+, \iota(h)=\sum\limits_{k\geq 1}{\Delta'}^{[k]}(h))$
defines an injective Hopf algebra morphism from $H$ to $QSh(H^+)$.
In particular, any conilpotent (resp. conilpotent commutative) Hopf algebra embeds into a noncommutative quasi-shuffle (resp. a quasi-shuffle) Hopf algebra.
\end{theorem}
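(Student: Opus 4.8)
The plan is to realize $\iota$ as a specialization of the isomorphism of Theorem~\ref{mainthm} and then to read off injectivity from the grading. I would apply Theorem~\ref{mainthm} with the unital algebra $A:=H$ and the algebra map $\mathrm{Id}_H\colon H\to H$; its restriction to $H^+$ is the inclusion $j_0\colon H^+\hookrightarrow H$, and the corresponding Hopf algebra map $H\to QSh(H)$ is $\widetilde{j_0}=\sum_{i\geq 0}j_0^{\otimes i}\circ{\Delta'}^{[i]}$, a locally finite sum by conilpotency. Since the $i=0$ term is the composite of $\eta_H$ with the unit of $QSh(H)$, we get $\widetilde{j_0}(1)=1$ and $\widetilde{j_0}(h)=\sum_{i\geq 1}{\Delta'}^{[i]}(h)$ for $h\in H^+$, i.e.\ $\widetilde{j_0}=\iota$. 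Next I would observe that $\iota$ takes values in $QSh(H^+)$: indeed $H^+=\ker\eta_H$ is a two-sided ideal of $H$, so ${\Delta'}^{[i]}(h)\in(H^+)^{\otimes i}$, and, more importantly, $k\oplus\bigoplus_{i\geq1}(H^+)^{\otimes i}$ is stable under the quasi-shuffle product of $QSh(H)$ (the only new letters produced, the $a\cdot_H b$, stay in the ideal $H^+$) and under deconcatenation; this sub-bialgebra, conilpotent hence a sub-Hopf-algebra, is exactly the quasi-shuffle Hopf algebra $QSh(H^+)$ of Section~\ref{sect:2} attached to the (non-unital) associative algebra $H^+$. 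Corestricting, $\iota\colon H\to QSh(H^+)$ is a morphism of bialgebras, hence of Hopf algebras (a bialgebra morphism between Hopf algebras automatically commutes with the antipodes).

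For injectivity I would use the grading of $QSh(H^+)$: for $h\in H^+$ one has $\iota(h)=h+\sum_{i\geq 2}{\Delta'}^{[i]}(h)$ with $h\in QSh(H^+)_1=H^+$ and the remaining terms in $\bigoplus_{i\geq2}QSh(H^+)_i$, so the composite of $\iota|_{H^+}$ with the projection $QSh(H^+)\twoheadrightarrow QSh(H^+)_1$ is $\mathrm{Id}_{H^+}$; together with $\iota(1)=1\in QSh(H^+)_0$ and $H=k\cdot 1_H\oplus H^+$ this shows $\iota$ is injective. For the last assertion, $QSh(H^+)$ is always a noncommutative quasi-shuffle (tridendriform) Hopf algebra, and if $H$ is commutative then the ideal $H^+$ is a commutative non-unital algebra, so $QSh(H^+)$ is then a genuine quasi-shuffle Hopf algebra (with commutative product $\qshuffle$); this yields the two embedding statements.

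There is no deep obstacle once Theorem~\ref{mainthm} is in hand; the proof is essentially bookkeeping, and the one point deserving care — isolated above — is the passage from $QSh(H)$, to which the theorem applies as stated, to $QSh(H^+)$, which rests on noticing that $QSh(H^+)$ sits inside $QSh(H)$ as a sub-Hopf-algebra precisely because $H^+$ is an ideal. Equivalently, one can phrase the result as the statement that $\iota$ is the unit of Theorem~\ref{mainthm} read as an adjunction between conilpotent Hopf algebras and (not necessarily unital) associative algebras, $H\mapsto H^+$ being left adjoint to $A\mapsto QSh(A)$; the proof of Theorem~\ref{mainthm} applies verbatim in that generality, as it only uses multiplicativity on $H^+$ of the map being transported, never a unit on the target.
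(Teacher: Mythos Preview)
Your proposal is correct and follows exactly the approach the paper intends: the paper presents this theorem as an immediate corollary of Theorem~\ref{mainthm} and gives no separate proof, so you are simply spelling out what the paper leaves implicit---namely the corestriction from $QSh(H)$ to the sub-Hopf-algebra $QSh(H^+)$, the injectivity via projection onto tensor degree~1, and the commutative specialisation. Your closing remark that Theorem~\ref{mainthm} already works with a non-unital target (since only multiplicativity on $H^+$ is used) is a clean way to bypass the corestriction step altogether.
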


We let the reader check the following Lemma, that will be important later in the article and makes Theorem \ref{mainthm} more precise:
\begin{lemma}
The map $j \in {\mathcal L}( QSh(A), A )$ is a morphism of algebras.
\end{lemma}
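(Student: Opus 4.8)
The plan is to check directly that $j$ intertwines the quasi-shuffle product $\qshuffle$ on $QSh(A)$ with the product $\cdot_A$ of $A$; since $j(1)=1_A$ holds by definition and $A$ is unital, this is exactly what is required for $j$ to be a morphism of unital algebras. Because $j$ is linear and $\qshuffle$ is bilinear, it suffices to prove $j(v\qshuffle w)=j(v)\cdot_A j(w)$ for homogeneous $v\in QSh(A)_r$ and $w\in QSh(A)_s$, and I would organise the verification into three exhaustive cases according to whether $\min(r,s)=0$, $(r,s)=(1,1)$, or ($r,s\geq 1$ and $\max(r,s)\geq 2$).

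In the first case one of $v,w$, say $v$, is a scalar multiple $\lambda 1$ of the unit of $QSh(A)$; then $v\qshuffle w=\lambda w$, while $j(\lambda 1)=\lambda 1_A$ is the unit of $A$, so both sides equal $\lambda j(w)$. In the second case, writing $v=a$ and $w=b$ with $a,b\in A=QSh(A)_1$, the inductive definition of the quasi-shuffle product (taken with empty tails) gives $a\qshuffle b = ab+ba+(a\cdot_A b)$, where the first two summands are words of length $2$ and the last is the word $a\cdot_A b$ of length $1$; applying $j$ annihilates the two length-$2$ words and leaves $a\cdot_A b=j(a)\cdot_A j(b)$. In the remaining case at least one of $r,s$ is $\geq 2$, so at least one of $j(v),j(w)$ vanishes and the right-hand side is zero; on the other hand, by the degree estimate recorded in Section~\ref{sect:2} the image of $QSh(A)_r\otimes QSh(A)_s$ under $\qshuffle$ is contained in $\bigoplus_{t=\max(r,s)}^{r+s}QSh(A)_t$, so every homogeneous component of $v\qshuffle w$ has length at least $\max(r,s)\geq 2$ and is therefore killed by $j$. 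Hence $j(v\qshuffle w)=0$ as well, and the three cases together give the claim.

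I do not anticipate a genuine obstacle: the proof is a short case check whose only non-formal ingredient is the length bound for $\qshuffle$, which is already available. The one point deserving care is that $j$ is \emph{not} zero on $QSh(A)_0=k\cdot 1$, so the unit components have to be isolated, and one uses that $A$ is unital so that $j(1)=1_A$ is meaningful. If a conceptual explanation is preferred, one may observe that the restriction of $j$ to $QSh^+(A)$ is precisely the counit, at $A$, of the quasi-shuffle enveloping-algebra adjunction, where $A$ is endowed with the trivial quasi-shuffle structure $\prec=\succ=0$, $\bullet=\cdot_A$ (for which $\star=\cdot_A$): being a morphism of quasi-shuffle algebras it automatically respects $\star=\qshuffle$, which is the assertion. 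The direct verification above is, however, entirely self-contained.
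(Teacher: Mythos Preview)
Your direct case analysis is correct and complete: the three cases are exhaustive, and the only substantive input, the length bound $\qshuffle(QSh(A)_r\otimes QSh(A)_s)\subset\bigoplus_{t\geq\max(r,s)}QSh(A)_t$, is exactly the one recorded in Section~\ref{sect:2}. The paper itself offers no proof of this lemma (it is left to the reader), so there is nothing to compare against; your argument is the natural verification and works for $A$ merely associative, as required. One minor caveat on the conceptual aside: the adjunction you invoke is stated in the paper only in the commutative setting, so while the trivial NQSh structure $\prec=\succ=0$, $\bullet=\cdot_A$ does satisfy the NQSh axioms and the idea extends, strictly speaking only your direct computation is self-contained relative to what the paper proves.
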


\section{The map $j \in \mathcal{U}( QSh(A), A )$.}\label{sect:4}

We shall now illustrate the ideas of the previous section on the
 map $j \in \mathcal{U}( QSh(A), A )$ (recall it is defined by $j (
1) = 1_A$, $j ( a_1 ) = a_1$ and $j ( a_1  \ldots  a_r
) = 0$ if $r \geq 2$). In a sense, this will be the only computation of inverse
and of Birkhoff decomposition we will need. This map $j$ plays a fundamental role. We already saw that it appears in the adjunction ${\mathcal L}(C^+,A)\cong Hom_{Coalg}(C,QSh(A)).$ It will also appear later to be the unit of a semigroup structure on $\mathcal{U}( QSh(A), A )$ to be introduced in the next section.

For the inverse, we get  $j^{\ast - 1}$ :
\[ 
     j^{\ast - 1}  = u_A \circ \eta + \sum_{k \geq 1} ( u_A
     \circ \eta - j )^{\ast k}
 \]
Which means that $j^{\ast - 1} ( 1 ) = 1_A$ and for
$\tmmathbf{a}= a_1  \ldots  a_s \in {QSh(A)}^+$,
\begin{equation}
  \begin{array}{ccc}
    j^{\ast - 1} (\tmmathbf{a}) & = &\displaystyle \sum_{k \geq 1} ( - 1 )^k m_A^{[ k ]}
    \circ j^{\otimes_{} k} \circ {\Delta'}^{[ k ]} (\tmmathbf{a})\\
    & = &\displaystyle \sum_{k \geq 1} ( - 1 )^k \sum_{\tmmathbf{a}^1 \cdot \ldots
    \cdot \tmmathbf{a}^k =\tmmathbf{a} \atop \tmmathbf{a}^i\in {QSh(A)}^+} m_A^{[ k ]} \circ j^{\otimes k}
    (\tmmathbf{a}^1 \otimes \ldots \otimes
    \tmmathbf{a}^k )\\
    & = &\displaystyle ( - 1 )^s  m_A^{[ s ]}
    ({a}_1 \otimes \ldots \otimes
    {a}_s )\\
    & = & ( - 1 )^s a_1\cdot_A \ldots\cdot_A a_s=j\circ S(\tmmathbf{a})
  \end{array}
\end{equation}
where
\begin{equation}
  \begin{array}{lll}
    S (\tmmathbf{a}) & = & \displaystyle \sum_{k \geq 1} ( - 1 )^k m^{[ k ]}
    \circ \Delta'^{[ k ]} (\tmmathbf{a})\\
    & = & \displaystyle\sum_{k \geq 1} ( - 1 )^k \sum_{\tmmathbf{a}^1 \cdot \ldots
    \cdot \tmmathbf{a}^k =\tmmathbf{a} \atop \tmmathbf{a}^i\in {QSh(A)}^+} 
    \tmmathbf{a}^1 \qshuffle \ldots \qshuffle
    \tmmathbf{a}^k .
  \end{array}
\end{equation}
Note that the previous sums run over all the possible factorizations in nonempty words of $\tmmathbf{a}$ for the concatenation product.

If $( A, p_+ )$ is a Rota-Baxter algebra then the  Bogoliubov preparation map
$\bar{j}$ associated to $j$, see equation (\ref{eq:Bog}), is such that $\bar{j} (1 ) = 1_A$ and can be
defined recursively on vector spaces $QSh(A)_n$ ($n \geq 1$) by
\begin{equation}
  \bar{j} ( h ) = j ( h ) - m_A \circ ( p_- \otimes \tmop{Id} ) \circ (
  \bar{j} \otimes j ) \circ \Delta' ( h )
\end{equation}
Let us begin the recursion on the length of the sequence. If $\tmmathbf{a}=
a_1$ then $\bar{j} ( a_1 ) = j ( a_1 ) = a_1$. Now, if $\tmmathbf{a}=
a_1 \cdot a_2=a_1 a_2$,
\begin{equation}
  \bar{j} ( a_1  a_2 ) = j ( a_1  a_2 ) - m_A \circ ( p_-
  \otimes \tmop{Id} ) \circ ( \bar{j} \otimes j ) ( ( a_1 )
  \otimes ( a_2 ) ) = - p_- ( a_1 )\cdot_A a_2
\end{equation}
and
\begin{equation}
  \begin{array}{ccc}
    \bar{j} ( a_1 a_2 a_3 ) & = & - m_A \circ ( p_- \otimes \tmop{Id} )
    \circ ( \bar{j} \otimes j ) ( ( a_1  a_2 ) \otimes (
    a_3 ) )\\
    & = & p_- ( p_- ( a_1 )\cdot_A a_2 )\cdot_A a_3
  \end{array}
\end{equation}
Thus, for $r \geq 2$,
\begin{equation}
  \bar{j} ( a_1  \ldots  a_r ) = - p_- ( \bar{j} ( a_1 \ldots
  a_{r - 1} ) )\cdot_A a_r
\end{equation}
It is then easy to prove that in general (see e.g. \cite{emp} for a systematic study of combinatorial approaches and closed solutions  to the Bogoliubov recursion)

\begin{proposition}
  The Birkhoff decomposition 
  $$( j_+, j_- ) \in \mathcal{U}(
  QSh(A), A_+ ) \times \mathcal{U}( QSh(A), A_- )$$ such that
  \[ j_- \ast j = j_+ \]
  is given by the formula : for $r \geq 1$ and $\tmmathbf{a}= a_1 \otimes
  \ldots \otimes a_r \in {QSh(A)}^+$,
  \begin{equation}\label{eq:Birkj}
    \hspace{1em} \left\{\begin{array}{lllll}
      j_+ (\tmmathbf{a}) & = & p_+ ( \bar{j} (\tmmathbf{a}) ) & = & ( - 1 )^{r
      - 1} p_+ ( p_- ( \ldots ( p_- ( a_1 )\cdot_A a_2 ) \ldots \cdot_A a_{r - 1} )\cdot_A a_r )\\
      j_- (\tmmathbf{a}) & = & - p_- ( \bar{j} (\tmmathbf{a}) ) & = & ( - 1
      )^r p_- ( p_- ( \ldots ( p_- ( a_1 )\cdot_A a_2 ) \ldots \cdot_Aa_{r - 1} ) \cdot_A a_r )
    \end{array}\right.
  \end{equation}
  Moreover, if $A$ is commutative then $\mathcal{C}( QSh(A), A )$ is
  a group and $j_{_+}$ and $j_-$ are characters.
\end{proposition}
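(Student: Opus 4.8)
The existence and uniqueness of the pair $(j_+,j_-)$, as well as the formulas $j_+=p_+\circ\bar j$ and $j_-=-p_-\circ\bar j$ on ${QSh(A)}^+$ (where $\bar j$ denotes the Bogoliubov preparation map of $j$), are already provided by the Birkhoff decomposition Proposition of Section~\ref{sect:1} applied to $\varphi=j$. Hence, to obtain the closed formula~\eqref{eq:Birkj}, the only thing left is to evaluate $\bar j$. The recursion obtained just above the statement reads $\bar j(a_1)=a_1$ and $\bar j(a_1\ldots a_r)=-p_-(\bar j(a_1\ldots a_{r-1}))\cdot_A a_r$ for $r\ge2$; a short induction on $r$ unfolds it into the nested expression appearing in~\eqref{eq:Birkj} (without the outermost projection), and applying $p_+$, resp.\ $-p_-$, yields~\eqref{eq:Birkj} exactly. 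This step is routine.

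For the last assertion, recall (the last Lemma of Section~\ref{sect:3}) that $j$ is a morphism of algebras, so $j\in\mathcal C(QSh(A),A)$, and that $\mathcal C(QSh(A),A)$ is a subgroup of $\mathcal U(QSh(A),A)$ since $A$ is commutative. One may of course invoke the character part of the Birkhoff Proposition here, but since that part was deliberately postponed there I prefer to argue directly. It is enough to show that $j_-$ is a character: then $j_+=j_-\ast j$ is a convolution product of characters with values in a commutative algebra, hence a character as well.

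The key point is the ``cocycle'' identity satisfied by the Bogoliubov map of $j$,
$$\bar j(\mathbf a\qshuffle\mathbf b)=\bar j(\mathbf a)\cdot_A j_+(\mathbf b)+j_-(\mathbf a)\cdot_A\bar j(\mathbf b)\qquad(\mathbf a,\mathbf b\in{QSh(A)}^+),$$
which I would prove by induction on $l(\mathbf a)+l(\mathbf b)$. Writing $\mathbf a=\mathbf a'a$ and $\mathbf b=\mathbf b'b$ and using the form of the stuffle recursion that peels off the last letters, one reduces $\bar j(\mathbf a\qshuffle\mathbf b)$ to shorter instances via $\bar j(\mathbf w\,c)=-p_-(\bar j(\mathbf w))\cdot_A c$ (valid for any nonempty word $\mathbf w$); the short ``merged'' words produced by the $\bullet$--part of $\qshuffle$ and the cases $l(\mathbf a)=1$ or $l(\mathbf b)=1$ are handled separately, and the inductive identity then comes out after one use of the weight~$-1$ Rota--Baxter relation for $p_-$. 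Granting the cocycle identity, set $x=\bar j(\mathbf a)$ and $y=\bar j(\mathbf b)$, so that $j_+(\mathbf b)=p_+(y)=y-p_-(y)$ and $j_-(\mathbf a)=-p_-(x)$; the identity becomes $\bar j(\mathbf a\qshuffle\mathbf b)=x\cdot_A p_+(y)-p_-(x)\cdot_A y$, and applying $-p_-$ and substituting the Rota--Baxter identity $p_-(x)\cdot_A p_-(y)=p_-(p_-(x)\cdot_A y)+p_-(x\cdot_A p_-(y))-p_-(x\cdot_A y)$ yields in two lines $j_-(\mathbf a\qshuffle\mathbf b)=p_-(x)\cdot_A p_-(y)=j_-(\mathbf a)\cdot_A j_-(\mathbf b)$. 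Together with the trivial $j_-(\mathbf a\qshuffle 1)=j_-(\mathbf a)=j_-(\mathbf a)\cdot_A 1_A$, this shows $j_-\in\mathcal C(QSh(A),A)$; since $\mathcal C(QSh(A),A)$ is a group, we are done.

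The one genuinely delicate point is the inductive proof of the cocycle identity: running the stuffle recursion and the Bogoliubov recursion simultaneously, tracking the shorter words produced by $\bullet$, and inserting the Rota--Baxter relation in the right place. The closed formula, the reduction of $j_+$ to $j_-$, and the final $p_-$--computation are all mechanical.
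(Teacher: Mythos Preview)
Your treatment of the closed formula for $\bar j$, $j_+$, $j_-$ matches the paper exactly: both simply unfold the one--step recursion $\bar j(a_1\ldots a_r)=-p_-(\bar j(a_1\ldots a_{r-1}))\cdot_A a_r$ established just before the Proposition.

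For the character assertion, your route differs from the paper's. The paper proves $j_-(\tmmathbf a\qshuffle\tmmathbf b)=j_-(\tmmathbf a)\,j_-(\tmmathbf b)$ \emph{directly} by induction on $l(\tmmathbf a)+l(\tmmathbf b)$: writing $\tmmathbf a=\tilde{\tmmathbf a}\cdot a_r$, $\tmmathbf b=\tilde{\tmmathbf b}\cdot b_s$, it sets $x=j_-(\tilde{\tmmathbf a})\cdot_A a_r$, $y=j_-(\tilde{\tmmathbf b})\cdot_A b_s$ (so that $j_-(\tmmathbf a)=-p_-(x)$, $j_-(\tmmathbf b)=-p_-(y)$), applies the Rota--Baxter identity once to $p_-(x)p_-(y)$, uses commutativity and the induction hypothesis on the three shorter pairs $(\tilde{\tmmathbf a},\tmmathbf b)$, $(\tmmathbf a,\tilde{\tmmathbf b})$, $(\tilde{\tmmathbf a},\tilde{\tmmathbf b})$, and reassembles via the right--peeling stuffle recursion. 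No auxiliary identity is needed.

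You instead pass through the cocycle identity $\bar j(\tmmathbf a\qshuffle\tmmathbf b)=\bar j(\tmmathbf a)\,j_+(\tmmathbf b)+j_-(\tmmathbf a)\,\bar j(\tmmathbf b)$ and deduce multiplicativity of $j_-$ from it. This is correct (your final $p_-$--computation is fine, and the cocycle identity does hold in the commutative case), and it has the merit of isolating a structural fact about $\bar j$ reminiscent of Atkinson--type factorizations. The cost is that the inductive verification of the cocycle identity --- which you only sketch --- is noticeably heavier than the paper's direct argument: one must carry both $j_+$ and $j_-$ through the recursion, treat the boundary cases $l(\tmmathbf a)=1$ or $l(\tmmathbf b)=1$ separately (as you note, since the identity does not extend to empty words), and still invoke commutativity and Rota--Baxter at each step. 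The paper's one--line recursion $j_-(\tmmathbf w\cdot c)=-p_-(j_-(\tmmathbf w)\cdot_A c)$ makes all of this unnecessary.
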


\begin{proof}
  Let us prove the last assumption, when $A$ is commutative. Since $j$
  is a character it is sufficient to prove that $j_-$ is a character. By
  induction on $t \geq 0$ we will show that for two tensors
  $\tmmathbf{a}$ and $\tmmathbf{b}$ in $QSh(A)$, if $l (\tmmathbf{a}) +
  l (\tmmathbf{b}) = t$, then
  \begin{equation}
    j_-   (\tmmathbf{a} \qshuffle
    \text{$\tmmathbf{b}$}  ) = j_- (\tmmathbf{a}) j_- (\tmmathbf{b})
  \end{equation}
  This identity is trivial for $t = 0$ and $t = 1$ since at least one of the
  sequences is the empty sequence. This also trivial for any $t$ if one of the
  sequences is empty. Now suppose that $t \geq 2$ and that $\tmmathbf{a}= a_1
   \ldots  a_r \in QSh(A)_{ r }$ and $\tmmathbf{b} =
  b_1  \ldots  b_s \in QSh(A)_{s }$ with $r \geq 1$,
  $s \geq 1$ and $r + s = t$. 
  Let $\tilde{\tmmathbf{a}} = a_1 \ldots
   a_{r - 1} \in QSh(A)_{r - 1 }$ ($\tilde{\tmmathbf{a}} =
  1$ if $r = 1$) and $\tmmathbf{\tilde{b}} = b_1  \ldots
   b_{s - 1} \in QSh(A)_{s - 1 }$ ($\tmmathbf{\tilde{b}} =
  1$ if $s = 1$), then :
  \[ \tmmathbf{a} \qshuffle \tmmathbf{b}  =
    ( \tilde{\tmmathbf{a}} \qshuffle \tmmathbf{b} )
     \cdot a_r +  (\tmmathbf{a} \qshuffle
     \tmmathbf{\tilde{b}} ) \cdot b_s +  ( \tilde{\tmmathbf{a}}
     \qshuffle\tmmathbf{\tilde{b}} ) \cdot  (a_r \cdot_A b_s) \]
  Now we have
  \[ j_- (\tmmathbf{a}) = - p_- ( j_- ( \tilde{\tmmathbf{a}} ) \cdot_A a_r ) = - p_- (
     x ) \hspace{1em} \tmop{and} \hspace{1em} j_- (\tmmathbf{b}) = - p_- ( j_-
     ( \tmmathbf{\tilde{b}} ) \cdot_A b_s ) = - p_- ( y ), \]
     where $x:=j_- ( \tilde{\tmmathbf{a}} ) \cdot_A a_r$ and $y:=j_-
     ( \tmmathbf{\tilde{b}} ) \cdot_A b_s$.
  Thanks to the Rota-Baxter identity, and omitting $\cdot_A$ in the following computations in $A$,
  \[ \begin{array}{ccc}
       j_- (\tmmathbf{a}) j_- (\tmmathbf{b}) & = & p_- ( x ) p_- ( y )\\
       & = & p_- ( x p_- ( y ) ) + p_- ( p_- ( x ) y ) - p_- ( x y )\\
       & = & p_- ( j_- ( \tilde{\tmmathbf{a}} ) a_r p_- ( j_- (
       \tmmathbf{\tilde{b}} ) b_s ) ) + p_- ( p_- ( j_- ( \tilde{\tmmathbf{a}}
       ) a_r ) j_- ( \tmmathbf{\tilde{b}} ) b_s ) \\
        & & \quad - p_- ( j_- (
       \tilde{\tmmathbf{a}} ) a_r j_- ( \tmmathbf{\tilde{b}} ) b_s )
     \end{array} \]
  but as $A$ is commutative, by induction we get
  \[ \begin{array}{ccc}
       j_- (\tmmathbf{a}) j_- (\tmmathbf{b}) & = & - p_- ( j_- (
       \tilde{\tmmathbf{a}} ) j_- (\tmmathbf{b}) a_r ) - p_- ( j_-
       (\tmmathbf{a}) j_- ( \tmmathbf{\tilde{b}} ) b_s ) - p_- ( j_- (
       \tilde{\tmmathbf{a}} ) j_- ( \tmmathbf{\tilde{b}} ) a_r b_s )\\
       & = & - p_- ( j_-   ( \tilde{\tmmathbf{a}} \qshuffle
       \tmmathbf{b}  ) a_r ) - p_- ( j_-   (\tmmathbf{a}
       \qshuffle \tmmathbf{\tilde{b}}  ) b_s ) - p_- ( j_- ( 
       \tilde{\tmmathbf{a}} \qshuffle \tmmathbf{\tilde{b}}  ) a_r b_s )\\
       & = & j_- (  ( \tilde{\tmmathbf{a}} \qshuffle \tmmathbf{b}
       ) \cdot a_r ) + j_- (  (\tmmathbf{a} \qshuffle
       \tmmathbf{\tilde{b}} ) \cdot b_s ) + j_- (  (
       \tilde{\tmmathbf{a}} \qshuffle \tmmathbf{\tilde{b}} ) \cdot (a_r  b_s ))\\
       & = & j_- (  ( \tilde{\tmmathbf{a}} \qshuffle \tmmathbf{b}
       ) \cdot a_r +  (\tmmathbf{a} \qshuffle
       \tmmathbf{\tilde{b}} ) \cdot b_s +  (
       \tilde{\tmmathbf{a}} \qshuffle \tmmathbf{\tilde{b}}  ) \cdot
       (a_r b_s) )\\
       & = & j_- (  \tmmathbf{a} \qshuffle \tmmathbf{b}
       ) 
     \end{array} \]
  
\end{proof}

In the sequel, when there is no ambiguity, we shall omit the notation $\cdot_A$ when applying formula (\ref{eq:Birkj}).

As we will see these formulas are almost sufficient to compute the Birkhoff
decomposition for any conilpotent bialgebra.

\section{The universal semigroup and renormalization.} \label{sect:5}

Let $A$ be a unital algebra. Then, by adjunction we know that $$\mathcal{U}(
QSh(A), A )\cong Hom_{Coalg}(
QSh(A), QSh(A) ).$$ In particular, the composition of coalgebra endomorphisms of $QSh(A)$ equips $\mathcal{U}(
QSh(A), A )$ with a semigroup structure.

\begin{definition}
The universal semigroup associated to a unital algebra $A$ is the set $\mathcal{U}(
QSh(A), A )$ equipped with the associative unital product induced by composition of coalgebra endomorphisms of $QSh(A)$:
for $f$ and $g$ in $\mathcal{U}(
QSh(A), A )$ 
\[ f \odot g := f\circ QSh(g)\circ \iota . \]
Its unit is the map $j$:
\[ f \odot j = f \circ QSh(j) \circ \iota = f\circ Id=f . \]
\end{definition}

This semigroup structure generalizes to an action on linear maps from a Hopf algebra to $A$ as follows.

\begin{definition}
Let $H$ be a conilpotent bialgebra. For
$\varphi \in \mathcal{U}( H, A )$ and $f \in \mathcal{U}( QSh(A), A )$
we set
\[ f\odot \varphi : = f \circ QSh(\varphi) \circ \iota .\]
This morphism $f\odot \varphi $ is  linear  from $H$ to $A$ and unital:
\[ f\odot \varphi  ( 1_H ) = f \circ QSh(\varphi) \circ \iota ( 1_H )
   = f \circ QSh(\varphi) ( 1 ) = f ( 1 ) = 1_A .\]
We get a left action of $\mathcal{U}( QSh(A), A )$ on $\mathcal{U}( H, A )$:
$$\odot : \mathcal{U}( QSh(A), A )\times \mathcal{U}( H, A )\to \mathcal{U}( H, A ).$$
Moreover, when $A$ is commutative, if $\varphi \in
\mathcal{C}( H, A )$ and  $f \in \mathcal{C}( QSh(A), A )$ it is clear,
by composition of algebra morphisms, that $ f\odot \varphi  \in \mathcal{C}( H,
A )$.
\end{definition}

That $j$ acts as the identity map on $\mathcal{U}( H, A )$ follows from: for $h \in H^+$,
  \begin{equation}
    \begin{array}{ccc}
      j\odot \varphi  ( h ) & = &\displaystyle j \circ QSh(\varphi)\left( h +
      \sum_{k \geq 2} \sum h'_{( 1 )} \otimes \ldots \otimes h'_{( k )}
      \right)\\
      & = &\displaystyle j \left( \varphi ( h ) + \sum_{k \geq 2} \varphi ( h'_{( 1 )} )
      \cdot \ldots \cdot \varphi ( h'_{( k )} ) \right)\\
      & = & \varphi ( h )
    \end{array}
  \end{equation}

\begin{proposition} The action $\odot$ and the convolution product $\ast$ (recall that $QSh(A)$ is a Hopf algebra) satisfy the distributivity relation:
For $f$ and $g$ in $\mathcal{U}( QSh(A), A )$ and $\varphi$ in
  $\mathcal{U}( H, A )$, 
  $$(f\ast g)\odot \varphi=(f\odot \varphi)\ast (g\odot \varphi).$$
  \end{proposition}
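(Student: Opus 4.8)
The plan is to recognise $f\odot\varphi$ and $g\odot\varphi$ as the precompositions of $f$ and $g$ with one and the same coalgebra morphism $H\to QSh(A)$, and then to exploit the fact that the convolution product is natural with respect to coalgebra morphisms on the source.

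First I would set $\Phi:=QSh(\varphi)\circ\iota:H\to QSh(A)$, so that, by the very definition of $\odot$, we have $f\odot\varphi=f\circ\Phi$, $g\odot\varphi=g\circ\Phi$ and $(f\ast g)\odot\varphi=(f\ast g)\circ\Phi$. The key point is that $\Phi$ is a morphism of coalgebras: $\iota$ is a Hopf (hence coalgebra) morphism by the Theorem stated after Theorem \ref{mainthm}, and $QSh(\varphi)$ — the image of the linear map $\varphi|_{H^+}:H^+\to A$ under the cofree coalgebra functor, $QSh(\varphi)(a_1\ldots a_r)=\varphi(a_1)\ldots\varphi(a_r)$ — is a coalgebra morphism for the deconcatenation coproducts regardless of whether $\varphi$ is multiplicative (this is precisely the content of the natural isomorphism ${\mathcal L}(C^+,A)\cong Hom_{Coalg}(C,QSh(A))$ recalled in Section \ref{sect:3}, applied to $C=H$). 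Consequently $(\Phi\otimes\Phi)\circ\Delta_H=\Delta_{QSh(A)}\circ\Phi$, and also $\Phi(1_H)=1$.

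Then the computation is immediate. On the one hand $(f\odot\varphi)\ast(g\odot\varphi)=m_A\circ\bigl((f\circ\Phi)\otimes(g\circ\Phi)\bigr)\circ\Delta_H=m_A\circ(f\otimes g)\circ(\Phi\otimes\Phi)\circ\Delta_H$, where $\ast$ here is the convolution on $\mathcal{L}(H,A)$ induced by $\Delta_H$. Using the coalgebra morphism identity for $\Phi$, this equals $m_A\circ(f\otimes g)\circ\Delta_{QSh(A)}\circ\Phi=(f\ast g)\circ\Phi$, where now $\ast$ denotes the convolution on $\mathcal{L}(QSh(A),A)$ induced by the deconcatenation coproduct of $QSh(A)$. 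By definition of $\odot$ the last expression is exactly $(f\ast g)\odot\varphi$, which is the desired identity. (That all the maps involved lie in $\mathcal{U}(H,A)$, i.e. send $1_H$ to $1_A$, follows from $\Phi(1_H)=1$ together with $f(1)=g(1)=1_A$, whence $(f\ast g)(1)=1_A$.)

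There is no genuine obstacle here; the only points deserving a moment's care are bookkeeping ones: that $QSh(\varphi)$ is a coalgebra morphism although $\varphi$ need not be an algebra morphism (so one invokes $QSh$ as the cofree coalgebra functor on linear maps rather than its functoriality on algebra maps), and that the symbol $\ast$ stands for two a priori distinct convolution products — one on $\mathcal{L}(H,A)$ and one on $\mathcal{L}(QSh(A),A)$ — which are intertwined precisely by precomposition with the coalgebra morphism $\Phi$.
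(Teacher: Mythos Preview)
Your argument is correct and is essentially the same as the paper's: the paper unfolds $(f\ast g)\odot\varphi$ and pushes $\Delta$ past $QSh(\varphi)$ and then past $\iota$ in two separate steps, whereas you package these as the single coalgebra morphism $\Phi=QSh(\varphi)\circ\iota$ and invoke naturality of convolution under precomposition by coalgebra maps. Your remark that $QSh(\varphi)$ is a coalgebra morphism for the deconcatenation coproduct even when $\varphi$ is merely linear (not multiplicative) is a useful clarification that the paper leaves implicit.
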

 Indeed, \begin{equation}
    \begin{array}{ccc}
      (f\ast g)\odot \varphi & = & m_A \circ ( f \otimes g ) \circ
      \Delta \circ QSh(\varphi) \circ \iota\\
      & = & m_A \circ ( f \otimes g ) \circ ( QSh(\varphi) \otimes
      QSh(\varphi) ) \circ \Delta \circ \iota\\
      & = & m_A \circ ( f \otimes g ) \circ ( QSh(\varphi) \otimes
      QSh(\varphi) ) \circ ( \iota \otimes \iota ) \circ \Delta\\
      & = & m_A (  f\odot \varphi  \otimes g\odot \varphi  ) \circ \Delta\\
     & = & (f\odot \varphi ) \ast  ( g\odot \varphi )
    \end{array}
  \end{equation}  
  
Note that, in the case $H=QSh(A)$, $\mathcal{U}( QSh(A), A )$ is equipped with two products $\ast$ and $\odot$ that look similar, in their interactions, to the product and composition of power series.

\begin{remark}
These constructions generalize as follows. Let $B$ be another unital algebra. For
$\varphi \in \mathcal{U}( H, A )$ and $f \in \mathcal{U}( QSh(A), B )$
we define
\[ f\odot \varphi = f \circ QSh(\varphi) \circ \iota .\]
The morphism $f\odot\varphi $ is linear  from $H$ to $B$ and
\[  f\odot \varphi  ( 1_H ) = f \circ QSh(\varphi)\circ \iota ( 1_H )
   = f \circ QSh(\varphi) ( 1 ) = f ( 1 ) = 1_B .\]
thus $f\odot \varphi  \in \mathcal{U}( H, B )$. Moreover, when $A$ and $B$ are commutative, if $\varphi \in
\mathcal{C}( H, A )$ and  $f \in \mathcal{C}( QSh(A), B )$ it is clear,
by composition of algebra morphisms that $ f\odot \varphi  \in \mathcal{C}( H,
B )$.
\end{remark}

\begin{corollary}
Let $\varphi \in \mathcal{U}( H, A )$, then its convolution inverse if given by
$$\varphi^{\ast -1}=j^{\ast -1}\odot \varphi.$$
\end{corollary}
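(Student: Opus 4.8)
The plan is to deduce the formula purely formally from the two structural properties of the action $\odot$ that have just been established: the distributivity relation $(f\ast g)\odot\varphi=(f\odot\varphi)\ast(g\odot\varphi)$ and the fact that $j$ acts as the identity, $j\odot\varphi=\varphi$. Since $QSh(A)$ is a conilpotent Hopf algebra and $A$ is unital, the Lemma of Section \ref{section:0}, applied with $QSh(A)$ in place of $H$, tells us that $\mathcal{U}(QSh(A),A)$ is a group for $\ast$; in particular $j\in\mathcal{U}(QSh(A),A)$ admits a convolution inverse $j^{\ast-1}$ lying in $\mathcal{U}(QSh(A),A)$, so $j^{\ast-1}\odot\varphi$ is a well-defined element of $\mathcal{U}(H,A)$ and the statement makes sense.

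First I would record the one small verification that is really needed: that the neutral element $u_A\circ\eta$ of the convolution algebra $(\mathcal{L}(QSh(A),A),\ast)$ acts through $\odot$ as the neutral element of $(\mathcal{L}(H,A),\ast)$, i.e. $(u_A\circ\eta)\odot\varphi=u_A\circ\eta$. This is immediate from the definition $f\odot\varphi=f\circ QSh(\varphi)\circ\iota$: the map $QSh(\varphi)$ is a coalgebra morphism between cofree conilpotent coalgebras and $\iota$ is a Hopf algebra morphism, so both are compatible with counits; hence $\eta_{QSh(A)}\circ QSh(\varphi)\circ\iota=\eta_{H}$, and composing with $u_A$ gives the claim.

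Then I would apply the distributivity Proposition with $f=j$, $g=j^{\ast-1}$, and symmetrically with $f=j^{\ast-1}$, $g=j$. On one side, $(j\ast j^{\ast-1})\odot\varphi=(u_A\circ\eta)\odot\varphi=u_A\circ\eta$ by the previous step; on the other side, $(j\ast j^{\ast-1})\odot\varphi=(j\odot\varphi)\ast(j^{\ast-1}\odot\varphi)=\varphi\ast(j^{\ast-1}\odot\varphi)$, using $j\odot\varphi=\varphi$. Therefore $\varphi\ast(j^{\ast-1}\odot\varphi)=u_A\circ\eta$, and likewise $(j^{\ast-1}\odot\varphi)\ast\varphi=u_A\circ\eta$. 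Thus $j^{\ast-1}\odot\varphi$ is a two-sided convolution inverse of $\varphi$ in the group $\mathcal{U}(H,A)$, and by uniqueness of inverses it equals $\varphi^{\ast-1}$.

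I do not expect a genuine obstacle here: once the distributivity relation and $j\odot\varphi=\varphi$ are in hand, the corollary is a one-line group-theoretic consequence, exactly the kind of "transport of identities" advertised by the $\odot$-action. The only point requiring a touch of care is the bookkeeping of the two distinct convolution units across the map $\odot$, namely checking $(u_A\circ\eta)\odot\varphi=u_A\circ\eta$; that is precisely where one invokes that $\iota$ and $QSh(\varphi)$ respect counits.
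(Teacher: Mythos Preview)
Your proof is correct and follows essentially the same approach as the paper: use $j\odot\varphi=\varphi$ together with the distributivity relation to transport the identity $j^{\ast-1}\ast j=u_A\circ\eta$ along $\odot$. You are simply more careful than the paper in explicitly verifying that $(u_A\circ\eta)\odot\varphi=u_A\circ\eta$ and in checking both sides of the inverse, whereas the paper writes this as a one-liner and tacitly uses that $\mathcal{U}(H,A)$ is a group.
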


Indeed, since $j\odot \varphi  = \varphi$, if
$\psi := j^{\ast - 1} \odot\varphi $, then
\[ \psi \ast \varphi =  ( j^{\ast - 1}\odot \varphi ) \ast ( j\odot \varphi ) = (
   j^{\ast - 1} \ast j)\odot \varphi  =  ( u_A \circ \eta)\odot \varphi 
   = u_A \circ \eta .\]

For example, if $h \in H^+$ with $\Delta'^{[4]}(h)=0$, then
\[ \iota ( h ) = h + \sum h'_{( 1 )} \otimes h'_{( 2 )} + \sum h'_{( 1 )} \otimes h'_{(
   2 )} \otimes h'_{( 3 )} \]
so,
\[ QSh(\varphi) \circ \iota ( h ) = \varphi ( h ) + \sum\varphi ( h'_{( 1
   )} ) \cdot \varphi ( h'_{( 2 )} ) +\sum \varphi ( h'_{( 1 )} ) \cdot
   \varphi ( h'_{( 2 )} ) \cdot \varphi ( h'_{( 3 )} ) \]
and finally
\[ \varphi^{\ast - 1} ( h ) = j^{\ast - 1} \circ QSh(\varphi) \circ
   \iota(h) = - \varphi ( h ) +\sum \varphi ( h'_{( 1 )} ) \varphi ( h'_{( 2 )} ) -
  \sum \varphi ( h'_{( 1 )} ) \varphi ( h'_{( 2 )} ) \varphi ( h'_{( 3 )} ) \]
We recover the usual formula for the inverse.

\begin{theorem}Assume now that $A$ is an idempotent Rota--Baxter algebra.
Let $\varphi \in \mathcal{U}( H, A )$. Then the Birkhoff-Rota-Baxter decomposition of $\varphi$ is given by
$$\varphi_-=j_-\odot \varphi,\ \ \varphi_+=j_+\odot\varphi .$$
\end{theorem}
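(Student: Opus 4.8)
The plan is to deduce the Birkhoff decomposition of $\varphi$ from that of $j$ by applying the action $\odot$ and exploiting the two properties it satisfies: $j\odot\varphi=\varphi$ and the distributivity $(f\ast g)\odot\varphi=(f\odot\varphi)\ast(g\odot\varphi)$ proved in the preceding proposition. Since $j_-\ast j=j_+$ by the Proposition computing the Birkhoff decomposition of $j$, applying $(\cdot)\odot\varphi$ to both sides gives
\[
j_+\odot\varphi=(j_-\ast j)\odot\varphi=(j_-\odot\varphi)\ast(j\odot\varphi)=(j_-\odot\varphi)\ast\varphi,
\]
so that, setting $\varphi_-:=j_-\odot\varphi$ and $\varphi_+:=j_+\odot\varphi$, we immediately get $\varphi_-\ast\varphi=\varphi_+$.

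It then remains to check that this is the Birkhoff decomposition, i.e. that $\varphi_+\in\mathcal{U}(H,A_+)$ and $\varphi_-\in\mathcal{U}(H,A_-)$; uniqueness is already known from the Proposition on Birkhoff decompositions. First, $\varphi_\pm\in\mathcal{U}(H,A)$ because $\odot$ takes values in $\mathcal{U}(H,A)$ (they send $1_H$ to $1_A$, as noted when $\odot$ was defined). For the target-subalgebra conditions, I would unwind the definition $f\odot\varphi=f\circ QSh(\varphi)\circ\iota$: for $h\in H^+$,
\[
(j_\pm\odot\varphi)(h)=j_\pm\Bigl(\varphi(h)+\sum_{k\geq 2}\varphi(h'_{(1)})\cdots\varphi(h'_{(k)})\Bigr),
\]
which is a (possibly infinite but locally finite, by conilpotency) sum of terms $j_\pm(a_1\cdots a_k)$ with $a_i\in A$. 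By the explicit formula (\ref{eq:Birkj}), each $j_+(a_1\cdots a_k)$ lies in $\mathrm{Im}\,p_+=A_+$ and each $j_-(a_1\cdots a_k)$ lies in $\mathrm{Im}\,p_-=A_-$. Since $A_+$ and $A_-$ are (in particular) linear subspaces, the sums lie in $A_+$ and $A_-$ respectively, giving $\varphi_+(H^+)\subset A_+$ and $\varphi_-(H^+)\subset A_-$. Hence $(\varphi_+,\varphi_-)$ is the Birkhoff decomposition of $\varphi$, and by the uniqueness part of the Proposition it coincides with the one produced by the Bogoliubov recursion.

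The only genuinely delicate point is the convergence/well-definedness of the infinite sum $\sum_{k\geq 2}\varphi(h'_{(1)})\cdots\varphi(h'_{(k)})$ appearing in $QSh(\varphi)\circ\iota(h)$, and the fact that $\iota$ is well-defined at all; but this is exactly the content of the Theorem on the embedding $\iota:H\to QSh(H^+)$ (the sum is finite on each $h$ by conilpotency, since ${\Delta'}^{[k]}(h)=0$ for $k$ large), so I may simply invoke it. Everything else is a formal consequence of the semigroup/action axioms and the already-established formula (\ref{eq:Birkj}) for $j_\pm$. One may also remark, in the commutative case, that if $\varphi$ is a character then $\varphi_\pm=j_\pm\odot\varphi$ are characters too, since $j_\pm\in\mathcal{C}(QSh(A),A)$ by the Proposition and $\odot$ preserves characters by composition of algebra morphisms — this recovers the last assertion of the Birkhoff Proposition with no extra work.
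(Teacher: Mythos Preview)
Your argument is correct and follows exactly the paper's approach: use $j\odot\varphi=\varphi$ together with the distributivity $(f\ast g)\odot\varphi=(f\odot\varphi)\ast(g\odot\varphi)$ to transport the identity $j_-\ast j=j_+$ to $\varphi_-\ast\varphi=\varphi_+$. The paper simply writes ``and, of course, $\varphi_{\pm}\in\mathcal{U}(H,A_{\pm})$'' where you spell out why (via the explicit formula for $j_\pm$ and linearity), so your version is if anything more complete than the original.
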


\begin{proof} Indeed, since  $ j\odot \varphi  =
\varphi$, we have
\[ \varphi_- \ast \varphi =  ( j_-\odot \varphi ) \ast  ( j\odot \varphi ) =  ( j_-
   \ast j)\odot \varphi  =  j_+\odot \varphi  = \varphi_+ \]
and, of course, $\varphi_{\pm} \in \mathcal{U}( H, A_{\pm} )$. 
\end{proof}

For example,
if $h \in H'$ with $\Delta'^{[4]}(h)=0$, then
\[ \begin{array}{ccc}
     \varphi_+ ( h ) & = & p_+ ( \varphi ( h ) ) -\sum p_+ ( p_- ( \varphi ( h'_{(
     1 )} ) ) \varphi ( h'_{( 2 )} ) ) + \sum p_+ ( p_- ( p_- ( \varphi ( h'_{( 1
     )} ) ) \varphi ( h'_{( 2 )} ) ) \varphi ( h'_{( 3 )} ) )\\
     & & \\
     \varphi_- ( h ) & = & - p_- ( \varphi ( h ) ) +\sum p_- ( p_- ( \varphi (
     h'_{( 1 )} ) ) \varphi ( h'_{( 2 )} ) ) -\sum p_- ( p_- ( p_- ( \varphi (
     h'_{( 1 )} ) ) \varphi ( h'_{( 2 )} ) ) \varphi ( h'_{( 3 )} ) )
   \end{array} \]

Needless to say that if $A$ is commutative, these computations works in the
subgroup $\mathcal{C}( H, A )$.

Once these formulas are given, we get formulas in the different contexts where
renormalization, or rather Birkhoff decomposition, is needed. We end this paper with two sections that illustrate how these formulas could be used :
\begin{itemize}
\item to perform inversion and Birkhoff decomposition of diffeomorphisms that correspond to characters on the Faà di Bruno Hopf algebra,
\item  to perform the Birkhoff decomposition with the same formula in various cofree Hopf algebras that differ by their algebra structures, but for which the map $\iota$ is the same as these Hopf algebras are tensor coalgebras.
\end{itemize}

\section{Renormalizing diffeomorphisms in pQFT and Dynamics}\label{sect:6}

Let us focus in this section on the example of the  Faà di Bruno Hopf algebra $\mathcal{H}_{\tmop{FdB}}$ (see \cite{BF,fig,MF,men}) 
whose group of characters corresponds to the group of formal identity-tangent diffeomorphisms. We will first express the reduced coproduct and then the map $\iota$ 
from this Hopf algebra to its associated quasi-shuffle Hopf algebra and then focus on the Birkhoff  decomposition of characters with values 
in the Laurent series that appear in several areas, as a factorisation of diffeomorphisms for the composition.

Recall that the decomposition is unique: the same results could be obtained by induction using the classical renormalization process (the Bogoliubov recursion). One advantage of the present approach is to encode the combinatorics of renormalization into a universal framework, probably similar to the one P. Cartier suggested when advocating the existence of a ``Galois group'' underlying renormalization.
Compare in particular our approach with \cite{ck1,guo,emp}.

Consider the group of formal identity tangent diffeomorphisms with coefficients in a commutative $\mathbb{C}$--algebra $A$:
\[ G(A) = \{ f ( x ) = x + \sum_{n \geq 1} f_n x^{n + 1} \in A[ [ x ]
   ] \} \]
   with its product $\mu : G(A) \times G(A) \rightarrow G(A)$ :
\[ \mu ( f, g ) = f \circ g. \]
For $n \geq 0$, the functionals on $G(A)$ defined by
\[ a_n ( f ) = \frac{1}{( n + 1 ) !_{}} ( \partial_x^{n + 1} f ) ( 0 ) = f_n
   \hspace{1em} a_n : G(A) \rightarrow A \]
are called de Faà di Bruno coordinates on the group $G(A)$ and $a_0 = 1$ being
the unit, they generates a graded unital commutative algebra
\[ \mathcal{H}_{\tmop{FdB}} =\mathbbm{C}[ a_1, \ldots, a_n, \ldots ]
   \hspace{1em} ( \tmop{gr} ( a_n ) = n ) \]
The action of these functionals on a product in $G(A)$ defines a
coproduct on $\mathcal{H}_{\tmop{FdB}}$ that turns to be a graded connected
Hopf algebra (see {\cite{fig}} for details). For $n \geq 0$, the coproduct is
defined by
\begin{equation}
  a_n \circ \mu = m \circ \Delta ( a_n ) \label{copfdb}
\end{equation}
where $m$ is the usual product in $A$, and the antipode reads
\[ S \circ a_n = a_n \circ \tmop{inv} \]
where $\tmop{inv} ( \varphi ) = \varphi^{\circ {- 1}}$ is the composition inverse of
$\varphi$.

For example if $f ( x ) = x + \sum_{n \geq 1} f_n x^{n + 1}$ and $g ( x ) = x
+ \sum_{n \geq 1} g_n x^{n + 1}$ then if $h(x)=f \circ g(x)= x + \sum_{n \geq 1} h_n x^{n + 1}$,
\[ \begin{array}{ccccccc}
     a_0 ( h ) & = & 1 = a_0 ( f ) a_0 ( g ) & \rightarrow & \Delta a_0 & = &
     a_0 \otimes a_0\\
     a_1 ( h ) & = & f_1 + h_1 & \rightarrow & \Delta a_1 & = & a_1 \otimes
     a_0 + a_0 \otimes a_1\\
     a_2 ( h ) & = & f_2 + 2f_1 g_1 + g_2 & \rightarrow & \Delta a_2 & = & a_2
     \otimes a_0 + 2a_1 \otimes a_1 + a_0 \otimes a_2.
   \end{array} \]
   
More generally, using classical formulas on the composition of diffeomorphisms (see \cite{BF,EFP1,MF,men11}), we have
\begin{equation}
\Delta (a_n)=\sum_{k=0}^{n} \sum_{l_0+\dots l_k=n-k\atop l_i\geq 0} a_k \otimes a_{l_0}\dots a_{l_k}
\end{equation}

Let us consider sequences of positive integers
\[ \mathcal{N}= \{ \tmmathbf{n}= ( n_1, \ldots, n_s ) \in (\mathbbm{N}^{\ast}
   )^s, \hspace{1em} s \geq 1 \} \]
For $\tmmathbf{n}= ( n_1, \ldots, n_s ) \in \mathcal{N}$, we denote
\[ \| \tmmathbf{n} \| = n_1 + \ldots + n_s, \hspace{1em} l ( \tmmathbf{n} ) =
   s, \hspace{1em} a_{\tmmathbf{n}}=a_{n_1}\dots a_{n_s} \]
and, if $n \geq 1$,
\[ \mathcal{N}_n = \{ \tmmathbf{n} \in \mathcal{N} \hspace{1em} ; \hspace{1em}
   \| \tmmathbf{n} \| = n \} \]

With these notations, the reduced coproduct (with $a_0=1$) reads
\begin{equation}
\Delta'(a_n)=\sum_{k=1}^{n-1} \sum_{\tmmathbf{n} \in \mathcal{N}_{n-k}} \left( \begin{array}{c}
k+1 \\
l(\tmmathbf{n})
\end{array} \right)a_k \otimes a_{\tmmathbf{n}}
\end{equation}
and when iterating the coproduct, we get, 

\begin{proposition}\label{prop:iotafdb}
For $n\geq 1$,
\begin{equation}\label{eq:iotafdb}
\iota(a_n)=\sum_{\tmmathbf{n}\in \mathcal{N}_n}\sum_{\tmmathbf{n}^1\dots \tmmathbf{n}^t=\tmmathbf{n}\atop
t\geq 1, l(\tmmathbf{n}^1)=1} \lambda(\tmmathbf{n}^1,\dots, \tmmathbf{n}^t) a_{\tmmathbf{n}^1}\otimes \cdots \otimes a_{\tmmathbf{n}^t}
\end{equation}
where the sums run over all the decompositions in non empty sequences $\tmmathbf{n}^1\dots \tmmathbf{n}^t=\tmmathbf{n}$ and \[
\lambda(\tmmathbf{n}^1,\dots, \tmmathbf{n}^t)=\prod_{i=2}^t \left( \begin{array}{c}
\|\tmmathbf{n}^1\dots \tmmathbf{n}^{i-1}\| +1 \\
l(\tmmathbf{n}^i) \end{array} \right)
\]
\end{proposition}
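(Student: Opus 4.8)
The plan is to derive \eqref{eq:iotafdb} by iterating the reduced coproduct formula
\[
\Delta'(a_n)=\sum_{k=1}^{n-1}\sum_{\tmmathbf{n}\in\mathcal{N}_{n-k}}\binom{k+1}{l(\tmmathbf{n})}a_k\otimes a_{\tmmathbf{n}},
\]
and bookkeeping the resulting combinatorial coefficients, since by Theorem \ref{mainthm} (its corollary giving $\iota$) we have $\iota(a_n)=\sum_{t\geq 1}{\Delta'}^{[t]}(a_n)$ on $\mathcal{H}_{\tmop{FdB}}^+$. The first step is to understand one application of $\Delta'$: it splits $a_n$ into a first block of \emph{length one}, namely $a_k$ for some $1\le k\le n-1$, tensored with an arbitrary monomial $a_{\tmmathbf{n}}$ of weight $n-k$ (here $\tmmathbf{n}$ may have any length). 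This already explains the constraint $l(\tmmathbf{n}^1)=1$ in the statement: every application of $\Delta'$ peels off one more length-one block from the left, so in ${\Delta'}^{[t]}(a_n)$ only the \emph{leftmost} tensor factor is forced to have length one.

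Next I would set up the iteration carefully. Applying $\Delta'$ to the left-hand factor $a_k$ repeatedly, and using the coassociativity relation ${\Delta'}^{[t]}=({\Delta'}^{[t-1]}\otimes\mathrm{Id})\circ\Delta'$, one sees that ${\Delta'}^{[t]}(a_n)$ is a sum over ways of writing $a_n$ as coming from a chain $a_n\leadsto a_{k_1}\otimes a_{\tmmathbf{m}^t}\leadsto a_{k_2}\otimes a_{\tmmathbf{m}^{t-1}}\otimes a_{\tmmathbf{m}^t}\leadsto\cdots$, where at stage $i$ we peel $a_{k_i}$ off the current head, leaving a monomial $a_{\tmmathbf{m}^{t+1-i}}$ of the appropriate weight. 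Re-indexing so that $\tmmathbf{n}^1=(k_{t-1})$ (the final head, of length one), $\tmmathbf{n}^2=\tmmathbf{m}^2$, \dots, $\tmmathbf{n}^t=\tmmathbf{m}^t$, the monomials $a_{\tmmathbf{n}^1},\dots,a_{\tmmathbf{n}^t}$ are exactly the blocks of an arbitrary decomposition $\tmmathbf{n}^1\cdots\tmmathbf{n}^t=\tmmathbf{n}$ of a composition $\tmmathbf{n}\in\mathcal{N}_n$ into nonempty sub-sequences with $l(\tmmathbf{n}^1)=1$. The key point here is that the weights work out: peeling $a_{k_i}$ at stage $i$ requires $k_i=\|\tmmathbf{n}^1\cdots\tmmathbf{n}^{\,t+1-i}\|$, i.e.\ the weight accumulated so far, which is precisely the top entry of the binomial coefficient $\lambda$.

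The third step is the coefficient count. Each peeling $a_{k_i}\mapsto\sum a_{k_i'}\otimes a_{\tmmathbf{m}}$ introduces a factor $\binom{k_i'+1}{l(\tmmathbf{m})}$, where $l(\tmmathbf{m})$ is the length of the block being split off at that stage. Tracking which block is split off at which stage: the block $a_{\tmmathbf{n}^i}$ (for $2\le i\le t$) is produced when we peel the head of weight $k=\|\tmmathbf{n}^1\cdots\tmmathbf{n}^{i-1}\|$, so it contributes $\binom{\|\tmmathbf{n}^1\cdots\tmmathbf{n}^{i-1}\|+1}{l(\tmmathbf{n}^i)}$; the final block $a_{\tmmathbf{n}^1}$, being length one and produced last, contributes $\binom{\,\cdot\,}{1}=\|\tmmathbf{n}^1\cdots\tmmathbf{n}^{0}\|+1=1$ (empty product / weight $0$), hence no factor. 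Multiplying gives exactly $\lambda(\tmmathbf{n}^1,\dots,\tmmathbf{n}^t)=\prod_{i=2}^t\binom{\|\tmmathbf{n}^1\cdots\tmmathbf{n}^{i-1}\|+1}{l(\tmmathbf{n}^i)}$, and summing $\sum_{t\ge1}{\Delta'}^{[t]}(a_n)$ over all such decompositions yields \eqref{eq:iotafdb}. A clean way to organize steps two and three simultaneously is a straightforward induction on $t$ (or on $n$): assume the formula for ${\Delta'}^{[t-1]}$, apply $\Delta'$ to the \emph{last} factor $a_{\tmmathbf{n}^t}$ instead — using ${\Delta'}^{[t]}=(\mathrm{Id}\otimes{\Delta'})\circ{\Delta'}^{[t-1]}$ is actually cleaner because then only the last block gets refined and the binomial coefficient of the newly-split piece has its upper entry equal to the total accumulated weight $\|\tmmathbf{n}^1\cdots\tmmathbf{n}^{t-1}\|+\|(\text{left part of }\tmmathbf{n}^t)\|+1$, matching $\lambda$ for the refined decomposition — and one only needs to check that the coefficients multiply correctly, which is the content of the identity $\binom{k+1}{l(\tmmathbf{m})}$ appearing in $\Delta'(a_k$-head$)$ composing with the inductive $\lambda$.

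The main obstacle is bookkeeping, not conceptual: one must be scrupulous about \emph{which} accumulated-weight appears in the upper slot of each binomial coefficient and verify it is $\|\tmmathbf{n}^1\cdots\tmmathbf{n}^{i-1}\|+1$ and not, say, $\|\tmmathbf{n}^1\cdots\tmmathbf{n}^i\|+1$ — an off-by-one-block error is the natural trap — and one must make sure the $l(\tmmathbf{n}^1)=1$ constraint emerges correctly (it does: the leftmost factor is the last thing peeled and peeling always produces a length-one head) while the other blocks $\tmmathbf{n}^i$, $i\ge2$, range over \emph{all} nonempty sequences with no length restriction. Checking the base case $t=1$ (where $\iota$ contributes $a_n={\Delta'}^{[1]}(a_n)$, corresponding to the single decomposition $\tmmathbf{n}=(n)$, $\tmmathbf{n}^1=(n)$, empty product $\lambda=1$) and the $t=2$ case against the displayed $\Delta'(a_n)$ formula pins down the indexing convention before the induction is run.
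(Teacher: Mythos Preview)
Your main argument is correct and is exactly what the paper has in mind: the paper's proof consists of the single remark that the formula follows from ``the recursive definition of reduced iterated coproduct,'' and your steps one through three carry this out, using ${\Delta'}^{[t]}=({\Delta'}^{[t-1]}\otimes\mathrm{Id})\circ\Delta'$ so that at each stage the factor being split is a single generator $a_k$, to which the displayed formula for $\Delta'(a_k)$ applies directly; the coefficient bookkeeping you give is right, and the constraint $l(\tmmathbf{n}^1)=1$ emerges for the reason you state.

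One caution about the ``cleaner'' alternative you propose at the end: writing ${\Delta'}^{[t]}=(\mathrm{Id}^{\otimes(t-2)}\otimes\Delta')\circ{\Delta'}^{[t-1]}$ and applying $\Delta'$ to the \emph{last} tensor factor is not in fact cleaner here, because that last factor $a_{\tmmathbf{n}^{t-1}}$ is in general a monomial $a_{m_1}\cdots a_{m_s}$ with $s\ge 1$, and the paper's formula for $\Delta'$ is only stated on the generators $a_n$. To use this route you would first have to expand $\Delta'(a_{m_1}\cdots a_{m_s})$ via multiplicativity of $\Delta$, which reintroduces exactly the combinatorics you are trying to avoid. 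Stick with the left-peeling recursion $({\Delta'}^{[t-1]}\otimes\mathrm{Id})\circ\Delta'$: it is the argument that works without extra input, and it is what the paper intends.
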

Note that we kept in formula (\ref{eq:iotafdb}) the tensor product notation to avoid confusion since we deal with words whose letters are monomials. The proof is simply based on the recursive definition of reduced iterated coproduct and already provides a formula for the composition inverse of a diffeomorphism in $G(A)$. 

\begin{corollary}
 Let $f ( x ) = x + \sum_{n \geq 1} f_n x^{n + 1}\in G(A)$, we can consider its associated character defined by $\varphi(a_n)=f_n$ and then, using our previous formulas, the coefficients of the composition inverse $g$ of $f$ are given by
\[
g_n=\varphi^{*{-1}}(a_n)=\sum_{\tmmathbf{n}=( n_1, \ldots, n_s )\in \mathcal{N}_n}\left( \sum_{\tmmathbf{n}^1\dots \tmmathbf{n}^t=\tmmathbf{n}\atop
t\geq 1, l(\tmmathbf{n}^1)=1} (-1)^t \lambda(\tmmathbf{n}^1,\dots, \tmmathbf{n}^t)\right) f_{n_1}\dots f_{n_s}
\]

\end{corollary}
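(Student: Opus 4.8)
The plan is to reduce the statement to the universal inversion formula $\varphi^{\ast-1}=j^{\ast-1}\odot\varphi$ proved above, combined with the explicit expression for $\iota(a_n)$ in Proposition \ref{prop:iotafdb}. First I would justify the identification $g_n=\varphi^{\ast-1}(a_n)$: since $S\circ a_n=a_n\circ\tmop{inv}$ we have $g_n=a_n(f^{\circ-1})=(\varphi\circ S)(a_n)$, and for a character $\varphi$ on a conilpotent Hopf algebra one has $\varphi^{\ast-1}=\varphi\circ S$ (Section \ref{section:0}); hence $g_n=\varphi^{\ast-1}(a_n)$, and it suffices to compute the latter.

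Next I would apply the Corollary giving $\varphi^{\ast-1}=j^{\ast-1}\odot\varphi=j^{\ast-1}\circ QSh(\varphi)\circ\iota$ and evaluate at $a_n$. Substituting the formula of Proposition \ref{prop:iotafdb}, $\iota(a_n)$ is a $\CC$-linear combination, indexed by $\tmmathbf{n}\in\mathcal{N}_n$ and by factorizations $\tmmathbf{n}^1\cdots\tmmathbf{n}^t=\tmmathbf{n}$ with $l(\tmmathbf{n}^1)=1$, of the length-$t$ words $a_{\tmmathbf{n}^1}\otimes\cdots\otimes a_{\tmmathbf{n}^t}$ in $QSh(\mathcal{H}_{\tmop{FdB}}^+)$ with coefficients $\lambda(\tmmathbf{n}^1,\dots,\tmmathbf{n}^t)$; each letter $a_{\tmmathbf{n}^i}$ is a monomial of positive degree in $\mathcal{H}_{\tmop{FdB}}$, hence lies in $\mathcal{H}_{\tmop{FdB}}^+$. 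Applying $QSh(\varphi)$ letter by letter and using that $\varphi$ is an algebra map turns each letter into $\varphi(a_{\tmmathbf{n}^i})=\prod_j f_{n^i_j}\in A$. Finally, applying $j^{\ast-1}$ and using the identity $j^{\ast-1}(b_1\cdots b_t)=(-1)^t\, b_1\cdot_A\cdots\cdot_A b_t$ established in Section \ref{sect:4}, a word of length $t$ produces the factor $(-1)^t$ times $\varphi(a_{\tmmathbf{n}^1})\cdot_A\cdots\cdot_A\varphi(a_{\tmmathbf{n}^t})$, which equals $(-1)^t f_{n_1}\cdots f_{n_s}$ since concatenating the $\tmmathbf{n}^i$ recovers $\tmmathbf{n}=(n_1,\dots,n_s)$ and $A$ is commutative. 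Regrouping the terms according to the underlying sequence $\tmmathbf{n}$ yields precisely the announced double sum.

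There is no genuine obstacle here: the corollary is merely a transcription of the general inversion formula into the Faà di Bruno context, as already indicated by the remark following Proposition \ref{prop:iotafdb}. The only points requiring care are bookkeeping ones: keeping the tensor-product notation so as not to confuse letters (which are monomials $a_{\tmmathbf{n}^i}$) with their product in $\mathcal{H}_{\tmop{FdB}}$; checking that every letter appearing in $\iota(a_n)$ lies in $\mathcal{H}_{\tmop{FdB}}^+$, so that $QSh(\varphi)$ and $j^{\ast-1}$ can be applied; and observing that the sign $(-1)^t$ in the final formula is exactly the one produced by $j^{\ast-1}$ on a word of length $t$, while the combinatorial weight $\lambda(\tmmathbf{n}^1,\dots,\tmmathbf{n}^t)$ is carried unchanged from Proposition \ref{prop:iotafdb}.
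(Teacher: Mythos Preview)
Your proposal is correct and follows exactly the route the paper intends: the paper does not spell out a proof but simply remarks that the corollary ``uses the obvious isomorphism between $G(A)$ and $\mathcal{C}(\mathcal{H}_{\tmop{FdB}},A)$'' together with Proposition~\ref{prop:iotafdb}, and your argument is precisely the unpacking of $\varphi^{\ast-1}=j^{\ast-1}\circ QSh(\varphi)\circ\iota$ evaluated at $a_n$ via that proposition and the formula $j^{\ast-1}(b_1\cdots b_t)=(-1)^t b_1\cdot_A\cdots\cdot_A b_t$. The bookkeeping points you flag (tensor notation, letters lying in $\mathcal{H}_{\tmop{FdB}}^+$, origin of the sign $(-1)^t$) are exactly the right ones to check.
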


This result, as the following one, uses the obvious isomorphism between $G(A)$ and $\mathcal{C}(\mathcal{H}_{\tmop{FdB}},A)$. One can also compute the Birkhoff decomposition in the group of formal identity-tangent diffeomorphism with coefficients in the a Rota-Baxter algebra of Laurent series 
$A =\mathbbm{C}[ [ \varepsilon, \varepsilon^{- 1} ]$ with its usual projections $p_+$ and $p_-$ on the regular and polar parts of such series. Any element
  \[ f ( x ) = x + \sum_{n \geq 1} f_n ( \varepsilon ) x^{n+1} \hspace{1em},
     \hspace{1em} f_n ( \varepsilon ) \in \mathbbm{C}[ [ \varepsilon,  \varepsilon^{- 1} ] \]
  can be decomposed as $f_- \circ f = f_+$ with
  \[ \begin{array}{ccccc}
       f_- ( x ) & = & \displaystyle x + \sum_{n \geq 1} f_{-, n} ( \varepsilon ) x^{n+1} &  &
       f_{-, n} ( \varepsilon ) \in \varepsilon^{- 1} \mathbbm{C}[
       \varepsilon^{- 1} ]\\
       f_+ ( x ) & = & \displaystyle x + \sum_{n \geq 1} f_{+, n} ( \varepsilon ) x^{n+1} &  &
       f_{+, n} ( \varepsilon ) \in \mathbbm{C}[ [ \varepsilon ] ].
     \end{array} \]
Using  proposition \ref{prop:iotafdb}, we get for $n\geq 1$,
\begin{proposition}
The coefficients of the Birkhoff decomposition of a formal identity-tangent diffeomorphism are given by
\begin{equation}\label{bddiff}
\begin{array}{rcl}
\varphi_+(a_n) &=& \dsp \sum_{\tmmathbf{n}\in \mathcal{N}_n}\sum_{\tmmathbf{n}^1\dots \tmmathbf{n}^t=\tmmathbf{n}\atop
 t\geq 1, l(\tmmathbf{n}^1)=1} \lambda(\tmmathbf{n}^1,\dots, \tmmathbf{n}^t)(-1)^{t-1} p_+ ( p_- ( \ldots ( p_- ( \varphi(a_{\tmmathbf{n}^1}) ) \varphi(a_{\tmmathbf{n}^2}) ) \ldots  )\varphi(a_{\tmmathbf{n}^t}) )\\
 
\varphi_-(a_n) &=& \dsp \sum_{\tmmathbf{n}\in \mathcal{N}_n}\sum_{\tmmathbf{n}^1\dots \tmmathbf{n}^t=\tmmathbf{n}\atop
t\geq 1, l(\tmmathbf{n}^1)=1} \lambda(\tmmathbf{n}^1,\dots, \tmmathbf{n}^t)(-1)^{t} p_- ( p_- ( \ldots ( p_- ( \varphi(a_{\tmmathbf{n}^1}) ) \varphi(a_{\tmmathbf{n}^2}) ) \ldots  )\varphi(a_{\tmmathbf{n}^t}) )
\end{array}
\end{equation}
where $\varphi$, $\varphi_+$ and $\varphi_-$ are the characters associated to $f$, $f_+$ and $f_-$ ($\varphi(a_n)=f_n$).

\end{proposition}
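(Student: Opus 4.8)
The plan is to specialize the general Birkhoff--Rota--Baxter formula $\varphi_{\pm}=j_{\pm}\odot\varphi$ of Section~\ref{sect:5} to the Faà di Bruno Hopf algebra, feeding in on one side the explicit description of the embedding $\iota$ provided by Proposition~\ref{prop:iotafdb}, and on the other the closed formula~(\ref{eq:Birkj}) for $j_{\pm}$ on words of $QSh(A)$. I first recall that, under the isomorphism $f\mapsto\varphi$ with $\varphi(a_n)=f_n$ between $G(A)$ and $\mathcal{C}(\mathcal{H}_{\tmop{FdB}},A)$ --- which follows at once from~(\ref{copfdb}) --- the factorization $f_-\circ f=f_+$ with $f_{\pm}\in G(A_{\pm})$ is precisely the Birkhoff decomposition $\varphi_-\ast\varphi=\varphi_+$ inside $\mathcal{C}(\mathcal{H}_{\tmop{FdB}},A)\subset\mathcal{U}(\mathcal{H}_{\tmop{FdB}},A)$, a legitimate group identity since $A$ is commutative. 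Hence
$$\varphi_{\pm}=j_{\pm}\odot\varphi=j_{\pm}\circ QSh(\varphi)\circ\iota,$$
and everything reduces to evaluating this composite on the generators $a_n$.

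Concretely I would proceed in three steps. \emph{Step 1:} substitute $\iota(a_n)$ using~(\ref{eq:iotafdb}), so that $\iota(a_n)$ is written as a $\lambda$-weighted sum of length-$t$ words $a_{\tmmathbf{n}^1}\otimes\cdots\otimes a_{\tmmathbf{n}^t}$ in $QSh(\mathcal{H}_{\tmop{FdB}}^+)$ whose letters are the monomials $a_{\tmmathbf{n}^i}\in\mathcal{H}_{\tmop{FdB}}^+$, the leftmost one satisfying $l(\tmmathbf{n}^1)=1$. \emph{Step 2:} apply $QSh(\varphi)$; being linear and sending a word $x_1\otimes\cdots\otimes x_t$ to the word $\varphi(x_1)\otimes\cdots\otimes\varphi(x_t)$ of the same length, it turns each summand into the length-$t$ word $\varphi(a_{\tmmathbf{n}^1})\otimes\cdots\otimes\varphi(a_{\tmmathbf{n}^t})\in QSh(A)$, with the same coefficient $\lambda(\tmmathbf{n}^1,\dots,\tmmathbf{n}^t)$. \emph{Step 3:} apply $j_{\pm}$ to each such word via~(\ref{eq:Birkj}) with $r=t$ and $a_i=\varphi(a_{\tmmathbf{n}^i})$; this produces the nested expression $(-1)^{t-1}p_+(p_-(\cdots(p_-(\varphi(a_{\tmmathbf{n}^1}))\varphi(a_{\tmmathbf{n}^2}))\cdots)\varphi(a_{\tmmathbf{n}^t}))$ for $j_+$ and the analogue with $p_-$ and sign $(-1)^t$ for $j_-$. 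Reassembling the $\lambda$-weighted sums yields exactly~(\ref{bddiff}); the assertion that $\varphi_{\pm}$ are characters (equivalently, that $f_{\pm}$ are genuine diffeomorphisms) follows from the last statement of the Proposition establishing~(\ref{eq:Birkj}) together with the fact that $\odot$ sends a pair of characters to a character when $A$ is commutative.

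Since each of the three steps merely invokes a result already proved, there is no deep obstacle here; the work is essentially bookkeeping. The point that demands care is that $QSh(\varphi)$ preserves the length of a word, so that~(\ref{eq:Birkj}) applies \emph{verbatim} with $r=t$ and that no quasi-shuffle cross-terms intervene --- which is indeed the situation, because $j_{\pm}$ is evaluated on the honest deconcatenation-type words coming out of $\iota$, not on $\qshuffle$-products. The only other detail to pin down, for a self-contained argument, is the identification under $f\mapsto\varphi$ of the abstract Birkhoff decomposition in $\mathcal{C}(\mathcal{H}_{\tmop{FdB}},A)$ with the composition factorization $f_-\circ f=f_+$: one checks that $\ast$ transports $\mu(f,g)=f\circ g$ and respects the splitting $A=A_+\oplus A_-$, both immediate from~(\ref{copfdb}).
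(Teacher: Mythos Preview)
Your proposal is correct and follows essentially the same approach as the paper: the paper simply writes ``Using proposition~\ref{prop:iotafdb}, we get for $n\geq 1$,'' and then states the formulas, leaving implicit precisely the three-step unpacking of $\varphi_{\pm}=j_{\pm}\circ QSh(\varphi)\circ\iota$ that you have written out. Your remarks about $QSh(\varphi)$ preserving word length and about the identification of $\ast$ with composition via~(\ref{copfdb}) are the right points to make explicit, and nothing is missing.
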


Let us explain how such diffeomorphisms appear in various area, where there Birkhoff decomposition makes sense.

Such a factorization appears first classicaly in quantum field theory: after
dimensional regularization, the unrenormalized effective coupling constants
are the image by a formal identity-tangent diffeomorphism of the coupling
constants of the theory (see \cite{ck2,EFP1} for a Hopf algebraic approach). Moreover, the coefficients of this diffeomorphism are
Laurent series in the parameter $\varepsilon$ associated to the dimensional
regularization process and the Birkhoff decomposition of this diffeomorphism gives
directly the bare coupling constants and the renormalized coupling
constants. 

As proved in \cite{ck2}, in the case of the massless
$\phi^3_6$ theory, the unrenormalized effective coupling constant can be written as a diffeomorphism $f ( x ) = x + \sum_{n \geq 1} f_n ( \varepsilon ) x^{n+1}$ where $x$ is the initial coupling constant. From the physical point of view, the decomposition $f_- \circ f = f_+$ is such that,  $x + \sum_{n \geq 1} f_{+, n} ( 0) x^{n+1}$ is the renormalized effective constant of the theory.

Such diffeomorphisms (and the need for renormalization) also appear in the classification of dynamical systems, especially when dealing with dynamical systems that cannot be analytically of formally linearized. Let us illustrate this on a very simple example (see \cite{men13} for a general approach). The following autonomous analytic dynamical system 
\[
\left\lbrace
\begin{array}{rcl}
\dsp \dot{x} &=&\alpha x\\
\dsp \dot{z} &=&\beta z +b(x)z^2
\end{array}\right. 
\]
can be considered as a perturbation of the linear system
\[
\left\lbrace
\begin{array}{rcl}
\dsp \dot{x} &=&\alpha x\\
\dsp \dot{y} &=&\beta y
\end{array} \right.
\]
so that one could expect that a change of coordinate $(x,y)=\psi(x,z)=\left(x,f(x,z) \right)$ allows to go from one system to the other one, 
that is to linearize the first system. In this simple case (see \cite{men13} for details) the solution should be $f(x,z)=\frac{z}{1-a(x)z}$ where
\[
\alpha xa'(x)+\beta a(x)+b(x)=0
\]
that yields formally, if $b(x)=\sum_{n\geq 0}b_n x^n$,
 \[ a(x)=-\sum_{n\geq 0}\frac{b_n}{\alpha n + \beta} x^n. \]
This series could be ill-defined whenever there exists $n_0$ such that $ \alpha n_0 + \beta=0$. 
This happens for example with $n=0$ for $(\alpha,\beta)=(1,0)$ and, in this case, we could regularize by considering the system with linear part $(\alpha,\beta)=(1+\varepsilon,\varepsilon)$.
As a function of $z$, $f(x,z)$ is then an identity-tangent diffeomorphism whose coefficients are in $\mathbbm{C}[ [x ]][[ \varepsilon ,\varepsilon^{- 1} ]$:
\[
 f(x,z)=\frac{z}{1-a(x)z}=z+\sum_{n\geq 1}a(x)^n z^{n+1} ,\quad a(x)=-\frac{b(0)}{\varepsilon}-\sum_{n\geq1}\frac{b_n}{n(1+\varepsilon)+\varepsilon}x^n.
 \]
This very simple case can be handled directly and, after Birkhoff decomposition, the regular part in $\varepsilon$ is  
\[
 f_+(x,z)=\frac{z}{1-a_+(x)z}=z+\sum_{n\geq 1}a_+(x)^n z^{n+1} ,\quad a_+(x)=-\sum_{n\geq1}\frac{b_n}{n(1+\varepsilon)+\varepsilon}x^n
 \]
 and, for $\varepsilon=0$, the corresponding change of coordinate conjugates the system 
 \[
\left\lbrace
\begin{array}{rcl}
\dsp \dot{x} &=& x\\
\dsp \dot{z} &=& b(x)z^2
\end{array}\right. 
\]
to 
\[
\left\lbrace
\begin{array}{rcl}
\dsp \dot{x} &=& x\\
\dsp \dot{y} &=&b(0)y^2
\end{array} \right. .
\]
This approach can be generalized to more general systems for which the Birkhoff decomposition is not so obvious, so that  formula (\ref{bddiff}) could be useful. For instance, the same process of regularization/factorization allows to conjugate the system 
\[
\left\lbrace
\begin{array}{rcl}
\dsp \dot{x} &=& x\\
\dsp \dot{z} &=& \sum_{k\geq 1} b_k(x)z^{k+1}
\end{array}\right. 
\] 
to a system
\[
\left\lbrace
\begin{array}{rcl}
\dsp \dot{x} &=& x\\
\dsp \dot{y} &=& \sum_{k\geq 1} c_k y^{k+1}
\end{array}\right. 
\]
which is called a "normal form", with coefficients $c_k$ that do not depend any more on $x$.

Diffeomorphisms in higher dimension (and thus the corresponding Hopf algebra)  appear as well in physics (with more than one coupling constant) and in dynamics: 
let us consider vector fields given by $\nu$ series $\tmmathbf{u}(\tmmathbf{x})=(u_1(\tmmathbf{x}),\dots,u_{\nu}(\tmmathbf{x}))\in \CC_{\geq 2}\lbrace \tmmathbf{x}\rbrace$ 
of $\nu$ variables $\tmmathbf{x}=(x_1,\dots,x_{\nu})$ that can be seen as "perturbations" of linear vector fields $(\lambda_1 x_1,\dots,\lambda_{\nu}x_{\nu})$:
\begin{equation}
\label{eq:vf}
\frac{dx_i}{dt}=\lambda_i x_i + u_i(\tmmathbf{x})=X_i(\tmmathbf{x}),\quad i=1,\dots, \nu.
\end{equation}
The linearization problem consists in finding an identity-tangent diffeomorphism $\varphi$ in dimension $\nu$ such that the change of coordinates $\tmmathbf{x}=\varphi(\tmmathbf{y})$ transforms the previous object into its linear part.
For differential equations, this reads, for $i=1,\dots, \nu$:
\begin{equation}
\frac{dx_i}{dt}=\sum_{j=1}^{\nu} \frac{dy_j}{dt}\frac{\partial \varphi_i}{\partial y_j}(\tmmathbf{y})=\sum_{j=1}^{\nu} \lambda_j y_j \frac{\partial \varphi_i}{\partial y_j}(\tmmathbf{y})=\lambda_i \varphi_i(\tmmathbf{y})+u_i(\varphi(\tmmathbf{y}))=\lambda_i x_i + u_i(\tmmathbf{x}).\label{eq:homVF}
\end{equation}
When trying to solve these so-called "homological equations", some obstructions can occur, independently on any assumption on the analycity of $\varphi$. These equations cannot be formally systematically solved when some combinations $ m_1 \lambda_1 + \ldots m_{\nu} \lambda_{\nu} - \lambda_i$ vanish (here $ i \in \{ 1, \ldots, \nu \},\ m_j\geq0,\ \sum m_j \geqslant 2$):

Such cancellations, which are called \textit{resonances}, prevent from  linearizing the differential and one can once again use regularization of the linear part 
and Birkhoff decomposition to get a change of coordinate that conjugate the vector field to a so-called normal form, see \cite{men13}.

\section{Tensor coalgebras, MZVs, Analysis}

If $X$ be an alphabet (that is a set), its associated tensor vector space $T(X)$ inherits a coalgebra structure related to the concatenation. If we note tensors products as words $\tmmathbf{x}=x_1\otimes \dots\otimes x_s=x_1\dots x_s$,
\[
\Delta(\tmmathbf{x})=1\otimes \tmmathbf{x} +\sum_{\tmmathbf{x}^1\tmmathbf{x}^2=\tmmathbf{x}} \tmmathbf{x}^1\otimes \tmmathbf{x}^2 + \tmmathbf{x} \otimes 1
\]
where the central sum, that corresponds to the reduced coproduct, is over nonempty words $\tmmathbf{x}^1,\tmmathbf{x}^2$ whose concatenation is $\tmmathbf{x}$.

The quasi-shuffle Hopf algebras $QSh(A)$ are examples of such coalgebras (choose simply a linear basis $X$ of $A$!).
There are however many Hopf algebras with such a coalgebra structure that differ as algebras --but the associated map $\iota$ and the associated formula for the Birkhoff decomposition of characters, does not depend on the algebra structure. For the map $\iota$, we obviously get:
\begin{equation}
\iota(\tmmathbf{x})= \sum_{\tmmathbf{x}^1\tmmathbf{x}^2\dots \tmmathbf{x}^t=\tmmathbf{x}\atop t\geq 1\ ;\ \tmmathbf{x}^i\not= \emptyset} \tmmathbf{x}^1\otimes \tmmathbf{x}^2 \otimes \dots \otimes \tmmathbf{x}^t
\end{equation}
and if $\varphi$ is a character from a Hopf algebra with such a coalgebra structure, with values in a commutative Rota-Baxter algebra $(A,p_+)$, the factorization $\varphi_- \ast \varphi=\varphi_+$ is given for any $\tmmathbf{x}\in T(X)$ by

\begin{equation} \label{eq:wordbirk}
\begin{array}{ccc}
     \varphi_+ ( \tmmathbf{x} ) & = & \dsp \sum_{\tmmathbf{x}^1\tmmathbf{x}^2\dots \tmmathbf{x}^t=\tmmathbf{x}\atop t\geq 1\ ;\ \tmmathbf{x}^i\not= \emptyset} (-1)^{t-1} p_+ ( p_- ( \ldots ( p_- ( \varphi(\tmmathbf{x}^1)) \varphi( \tmmathbf{x}^2) ) \ldots ) \varphi(\tmmathbf{x}^t)) \\
     \varphi_- ( \tmmathbf{x} ) & = & \dsp \sum_{\tmmathbf{x}^1\tmmathbf{x}^2\dots \tmmathbf{x}^t=\tmmathbf{x}\atop t\geq 1\ ;\ \tmmathbf{x}^i\not= \emptyset} (-1)^{t} p_- ( p_- ( \ldots ( p_- ( \varphi(\tmmathbf{x}^1)) \varphi( \tmmathbf{x}^2) ) \ldots ) \varphi(\tmmathbf{x}^t))
   \end{array} 
   \end{equation}
   
Let us list some example where this formula appear or can be used.
\begin{example}[Renormalization af Multiple Zeta Values (MZV)]
In \cite[Section 3]{GZ} Guo and Zhang consider regularized MZV as characters on a quasi-shuffle algebra $\mathcal{H}_{\mathfrak{M}}=T(\mathfrak{M})$ whose quasi-shuffle product stems from the additive semigroup structure of the alphabet
\[
\mathfrak{M}=\left\{[ \begin{array}{c}
s \\
r 
\end{array} ] \ ;\  (s, r) \in  \mathbb{Z}\times \mathbb{R}^{+*} \right\rbrace.
\]
They propose then a directional regularization of MZV defined on words
\[ Z([ \begin{array}{c}
s_1 \\
r_1 
\end{array} ]\dots [ \begin{array}{c}
s_k \\
r_k 
\end{array} ] ;\varepsilon) =\sum_{n_1>\dots>n_k>0}\frac{e^{n_1 r_1 \varepsilon}\dots e^{n_k r_k \varepsilon}}{n_1^{s_1}\dots n_k^{s_k}}
\]
that defines a character on $\mathcal{H}_{\mathfrak{M}}$ with values in an algebra of Laurent series. The formula they give for the Birkhoff decomposition (Theorem 3.8) coincide equation (\ref{eq:wordbirk}). 
\end{example}
\begin{example}[Rooted ladders]

As a toy model for applications in physics \cite[section 4.2]{EFP1} considers a character on the polynomial commutative Hopf algebra $\mathcal{H}^{\text{lad}}$ of ladder trees. If the ladder tree with $n$ nodes is $t_n$, then
\[
\Delta(t_n)=t_n \otimes 1+\sum_{k=1}^{n-1} t_k \otimes t_{n-k} +1 \otimes  t_n.
\]
It is a matter of fact to identify the coalgebra structure of $\mathcal{H}^{\text{lad}}$ with the tensor deconcatenation coalgebra $T(\{x\})$ over an alphabet with one letter, where $t_n$ corresponds to the word $\underbrace{x \dots x}_n$. Formula (\ref{eq:wordbirk}) can be applied to the character mapping the tree $t_n$ to an $n$-fold Chen's iterated integral defined recursively by 
\[
\psi(p ; \varepsilon, \mu)(t_n)= \mu^{\varepsilon}\int_p^{\infty} \psi(x ; \varepsilon, \mu)(t_{n-1})\frac{dx}{x^{1+\varepsilon}}=\frac{e^{-n\varepsilon \log(p/\mu)}}{n!\varepsilon^n}=f_n(\varepsilon)
\]
with values in the Laurent series in $\varepsilon$.
We get for the couterterms:
\begin{equation}
\psi_-(p ; \varepsilon, \mu)(t_n))=\sum_{n_1+\dots+n_t=n \atop t\geq 1 \ ,\ n_i >0}(-1)^{t} (-1)^{t} p_- ( p_- ( \ldots ( p_- ( f_{n_1}(\varepsilon)) f_{n_2}(\varepsilon) ) \ldots ) f_{n_t}(\varepsilon))
\end{equation}
\end{example}
\begin{example}[Differential equations]

When dealing with differential equations and associated diffeomorphisms (flow, conjugacy map), characters on shuffle Hopf algebras appear almost naturally. For instance, such characters correspond to:
\begin{itemize}
\item the coefficients of word series in \cite{MSS},
\item "symmetral moulds" in mould calculus (see \cite{FM,snag})
\item or Chen's iterated integrals (see for instance \cite{kre, manchon}).
\end{itemize}

Let us just give the example of a simple differential equation related to mould calculus (see \cite{men06}). Let $b ( x, y )=\sum_{n\geq 0}x^n b_n(y) \in y^2 \mathbbm{C}[ [ x, y ] ]$ and $d \in \mathbbm{N}$. If one looks for a formal identity tangent diffeomorphism $\varphi ( x, y )$ in $y$, with
coefficients in $\mathbbm{C}[ [ x ] ]$ such that, if $y$ is a
solution of
\begin{equation*}
  ( E_{b, d} ) \hspace{2em} x^{1 - d} \partial_x y = b ( x, y )
\end{equation*}
then $z = \varphi ( x, y )$ is a solution of
\begin{equation*}
  ( E_{0, d} ) \hspace{2em} x^{1 - d} \partial_x z = 0.
\end{equation*}
One can try to compute this diffeomorphism as a "mould series":
\begin{equation}\label{eq:moulddiff}
\varphi_d ( x, y )=y + \sum_{s \geq 1} \sum_{n_1, \ldots n_s \in
   \mathbbm{N}} V_d(n_1, \ldots, n_s) \mathbbm{B}_{n_s} \ldots
   \mathbbm{B}_{n_1}.y \quad (\mathbbm{B}_n = b_n ( y ) \partial_y )
\end{equation}
where $V_d$ is a character on the shuffle algebra $T(\mathbbm{N})$, with values in $\mathbbm{C}[ [ x ] ]$. Whenever $d$ is a positive integer, this character can be computed and for any word $( n_1, \ldots, n_s )$

\begin{equation}
V_d(n_1, \ldots, n_s) =  \frac{( - 1 )^s x^{n_1 + \ldots + n_s + s
       d}}{( \check{n}_1 + d ) ( \check{n}_2 + 2 d ) \ldots ( \check{n}_s + s
       d )} \quad ( \check{n}_i = n_1 + \ldots + n_i ).
\end{equation}  

The map $\varphi_d ( x, y )\in \mathbb{C}[[x,y]]$ is then well defined and conjugates $( E_{b, d} )$ to $(E_{0, d} )$. For $d=0$, there may be divisions by $0$ and, in this case, one can consider $d=\varepsilon$ as a real parameter and use the expansion $x^{\varepsilon}=\sum \frac{(\varepsilon \log x))^n}{n!}$ so that the character $V_{\varepsilon}$ has its values in $\mathfrak{B}[ [
\varepsilon ] ] [ \varepsilon^{- 1} ]$ where $\mathfrak{B}=\mathbbm{C}[[\log x,
x]$. If one uses the same formula (\ref{eq:wordbirk}) to perform the Birkhoff decomposition, the regular character $V_{\varepsilon,+}$, evaluated at $\varepsilon=0$ allows to find a diffeomorphism (as in equation (\ref{eq:moulddiff})) that conjugates $x \partial_x
  y = b ( x, y )$ to $x \partial_x z = 0$ with a price to
pay : it contains monomials in $x$ and $\log x$. See \cite{men06} for details.
\end{example}

%\section{Conclusion}
%
%\textit{The same could also be done with characters on Connes-Kreimer Hopf algebras, that appear as well in pQFT, see \cite{ck1}, as in dynamics, see \cite{FM} or in numerical analysis, in relation with B-series.}

Not also that the same ideas can be used for the the even-odd factorization of characters
in combinatorial Hopf algebras (see {\cite{ag1}}, {\cite{ag2}} and
{\cite{guo}}).

%
%\begin{acknowledgement}
%If you want to include acknowledgments of assistance and the like at the end of an individual chapter please use the \verb|acknowledgement| environment -- it will automatically render Springer's preferred layout.
%\end{acknowledgement}
%%
%
%\begin{equation}
%a \times b = c
%\end{equation}

\end{document}